\titlerunning{Multiple Solutions of Nonlinear Differential Equations}
\newcommand{\Reals}[1]{{\rm I\! R}^{#1}}
\newcommand\tenq[2][1]{%
 \def\useanchorwidth{T}%
  \ifnum#1>1%
    \stackunder[0pt]{\tenq[\numexpr#1-1\relax]{#2}}{\scriptscriptstyle\sim}%
  \else%
    \stackunder[1pt]{#2}{\scriptscriptstyle\sim}%
  \fi%
}
\def\text#1{{\rm #1}}
\newcommand{\vx}{{\bf{x}}}
\newtheorem{thm}{Theorem}[section]
\newtheorem{assump}[thm]{Assumption}
\title{Companion-Based Multi-Level Finite Element Method for Computing Multiple Solutions of Nonlinear Differential Equations} %and a Priori Error Analysis} 
\author{Wenrui Hao$^{1}$, Sun Lee$^{1}$, Young Ju Lee$^{2}$ }
\institute {1 Department Of Mathematics, Penn State, State College, PA 16802, USA  \and 2 Department Of Mathematics, Texas State, San Marcos, TX 78666, USA}
\begin{document}

\maketitle
\begin{abstract}
The use of nonlinear PDEs has led to significant advancements in various fields, such as physics, biology, ecology, and quantum mechanics. However, finding multiple solutions for nonlinear PDEs can be a challenging task, especially when suitable initial guesses are difficult to obtain. In this paper, we introduce a novel approach called the Companion-Based Multilevel finite element method (CBMFEM), which can efficiently and accurately generate multiple initial guesses for solving nonlinear elliptic semi-linear equations with polynomial nonlinear terms using finite element methods with conforming elements.
We provide a theoretical analysis of the error estimate of finite element methods using an appropriate notion of isolated solutions, for the nonlinear elliptic equation with multiple solutions and present numerical results obtained using CBMFEM which are consistent with the theoretical analysis.

%\textcolor{red}{Companion matrix makes sense only for large enough $h$ grid. - add it in algorithm, justified by theory, newton method becomes faster and better than interpolation ! Our companion matrix method is point-wise nonlinear Gauss-Seidel method. Block Gauss-Seidel. Why do we have many solutions on coarser grid ? }

\keywords{Elliptic Semilinear PDEs  \and Finite Element Method \and Multiple Solutions}
% \PACS{PACS code1 \and PACS code2 \and more}
\subclass{49M37 \and 65N30 \and 90C99}

\end{abstract}
\section{Introduction}

Nonlinear partial differential equations (PDEs) are widely used in various fields, and there are many versions of PDEs available. One such example is Reaction-Diffusion Equations, which find applications in physics, population dynamics, ecology, and biology. In physics, Simple kinetics, Belousov–Zhabotinskii reactions, and Low-temperature wave models are examples of applications. In population dynamics and ecology, the Prey-predator model and Pollution of the environment are relevant. In biology, Reaction-diffusion equations are used to study Cell dynamics and Tumor growth \cite{volpert2014elliptic}. Schrodinger equations \cite{kevrekidis2015defocusing,wang2015new} and Hamiltonian systems \cite{kapitula2004counting,simon1995concentration} are also important topics in the field of Quantum mechanics. Another important area in the realm of nonlinear PDEs is pattern formation, which has numerous applications, such as the Schnakenberg model \cite{deutsch2005mathematical}, the Swift-Hohenberg equation \cite{lega1994swift}, the Gray-Scott model \cite{wei2013mathematical}, the FitzHugh-Nagumo equation \cite{jones2009differential}, and the Monge–Ampère \cite{figalli2017monge,gutierrez2001monge} equation, which finds various applications.

In this paper, we focus on computing multiple solutions of elliptic semi-linear equations with nonlinear terms expressed as polynomials. Although we limit our nonlinear term to a polynomial, it is still an interesting case of the above applications that has yet to be fully explored.

To solve these nonlinear PDEs, various numerical methods have been developed, such as Newton's method and its variants, Min Max method, bifurcation methods \cite{zhao2022bifurcation}, multi-grid method \cite{henson2003multigrid,xu1996two} or subspace correction method \cite{chen2020convergence} or a class of special two-grid methods \cite{cai2009numerical,huang2016newton,xu1994novel,xu1996two,xu2022new}, deflation method \cite{farrell2015deflation}, mountain pass method \cite{breuer2003multiple,choi1993mountain}, homotopy methods \cite{chen2008homotopy,hao2014bootstrapping,hao2020homotopy,hao2020spatial,wang2018two}, and Spectral methods \cite{grandclement2009spectral}. However, finding multiple solutions can be a challenging task, primarily due to the difficulty of obtaining suitable initial guesses for multiple solutions. It is often uncertain whether good numerical initial guesses for each solution exist that can converge to the solutions. Even if such initial guesses exist, finding them can be a challenging task. 

To address this challenge, we introduce a novel approach called the Companion-Based Multilevel finite element method (CBMFEM) for solving nonlinear PDEs using finite element methods with conforming elements. Our method is based on the structure of the full multigrid scheme \cite{brandt2011multigrid} designed for the general nonlinear elliptic system. Given a coarse level, we compute a solution using a structured companion matrix, which is then transferred to the fine level to serve as an initial condition for the fine level. We use the Newton method to obtain the fine-level solution for each of these initial guesses and repeat this process until we obtain a set of solutions that converge to the stationary solutions of the PDE. Our approach is different in literature, such as those presented in \cite{breuer2003multiple} or \cite{li2017new}, which attempt to find additional solutions based on the previously found solutions.  

The main advantage of our method is that it can generate multiple initial guesses efficiently and accurately, which is crucial for finding multiple solutions for nonlinear PDEs. Furthermore, our method is robust and can be easily applied to a wide range of elliptic semi-linear equations with polynomial nonlinear terms. 

In this paper, we also present a mathematical definition of isolated solutions for elliptic semilinear PDEs with multiple solutions, which leads to well-posedness of the discrete solution and provides a priori error estimates of the finite element solution using the framework introduced in \cite{xu1994novel} and \cite{xu1996two}.

We organize the paper as follows: In \S \ref{maingov}, we introduce the governing equations and basic assumptions. In \S \ref{fem}, we present the error estimate of the nonlinear elliptic equation using the FEM method. In \S \ref{CBMFEM}, we introduce the CBMFEM, including the construction of the companion matrix and filtering conditions. Finally, in \S \ref{num}, we present numerical results obtained using CBMFEM, which are consistent with the theoretical analysis.
Throughout the paper, we use standard notation for Sobolev spaces $W^{k,p}(\Omega)$ and the norm $\|\cdot\|_{k,p}$. If $k = 0$, then $\|\cdot\|_{0,p}$ denotes the $L^p$ norm. The symbol $C^k(\Omega)$ denotes the space of functions, whose first $k \geq 0$ derivatives are continuous on $\Omega$. Additionally, we denote $\tenq{v}$ as the vector while $\tenq[2]{v}$ is the matrix.

\section{Governing Equations}\label{maingov} 
In this section, we introduce the governing equations that we will be solving. Specifically, we are interested in solving the quasi-linear equations, where we assume that polygonal (polyhedral) domain $\Omega$ is a bounded domain in $\Reals{d}$ with $d = 1, 2,$ or $3$.

\begin{equation}\label{Eq1}
-\Delta u + f(x, u) = 0, \quad \mbox{ in } \Omega,
\end{equation}
subject to the following general mixed boundary condition:
\begin{equation}\label{main:bdry} 
\alpha u - \beta \nabla u \cdot \textbf{n} = \gamma g, \quad \mbox{ on } \partial \Omega,
\end{equation}
where $\textbf{n}$ is the unit outward normal vector to $\partial \Omega$, $\alpha$, $\beta$ and $\gamma$ are functions that can impose condition of $u$, on $\partial \Omega$, such as the Dirichlet boundary $\Gamma_D$, pure Neumann boundary $\Gamma_N$, mixed Dirichlet and Neumann boundary or the Robin boundary $\Gamma_R$, i.e.,  
\begin{equation}
\partial \Omega = \overline{\Gamma}_D \cup \overline{\Gamma}_N \quad \mbox{ or } \quad \partial \Omega = \overline{\Gamma}_R,
\end{equation}
with $\overline{\Gamma}_D$, $\overline{\Gamma}_N$, and $\overline{\Gamma}_R$ being the closure of $\Gamma_D$, $\Gamma_N$, and $\Gamma_R$, respectively. Specifically, 
\begin{equation}
\beta|_{\Gamma_D} = 0, \quad \alpha|_{\Gamma_N} = 0, \quad \mbox{ and } \quad (\alpha \beta)|_{\Gamma_R} > 0. 
\end{equation}
We shall assume that $g$ is smooth, and in particular when $\overline{\Gamma}_N = \partial \Omega$, compatibility conditions will be assumed for the functions $f$ and $g$ if necessary,  \cite{leykekhman2017maximum}. For the sake of convenience, we denote $\Gamma_{1} = \Gamma_R$ or $\Gamma_N$ throughout this paper and assume that for some $c > 0$, a generic constant, the followings hold:  
\begin{equation}
\beta = \gamma \quad \mbox{ and } \quad 0 \leq \frac{\alpha}{\beta} \leq c \quad \mbox{ on } \quad \Gamma_{1}.  
\end{equation}
We also provide some conditions for the function $f$, which is generally a nonlinear polynomial function in both $x$ and $u$. We shall assume that $f(x,u) : \overline{\Omega} \times \Reals{} \mapsto \Reals{}$ is smooth in the second variable. We shall denote the $k$-th derivative of $f$ with respect to $u$ by $f^{(k)}$, i.e. $f^{(k)} = \frac{\partial^k f}{\partial u^k}$.   and that there are positive constants $C_1$ and $C_2$ such that
\begin{equation}\label{bdf} 
|f(x,u)| \leq C_1 + C_2 |u|^{q},
\end{equation}
where $q$ is some real value, such that $1 \leq q \leq \infty$ for $d = 1$, $1 \leq q < \infty$ for $d = 2$ and $1 \leq q < 5$ for $d = 3$. This is sufficient for defining the weak formulation (see \eqref{main:eq}). To apply the finite element method, we consider the weak formulation of Eq. \eqref{Eq1} which satisfies the fully elliptic regularity (see \cite{schatz1996some} and references cited therein), i.e., solution is sufficiently smooth. %Without loss of generality, we can assume that on $\Gamma_D$, $u = 0$ as a Dirichlet condition.
We introduce a space $V$ defined by:\begin{equation}
V = \{ v \in H^1(\Omega) : v|_{\Gamma_D} = 0 \}. 
\end{equation}
The main problem can then be formulated as follows: Find $u \in V$ such that 
\begin{equation}\label{main:eq} 
\mathcal{F}(u,v) = a(u,v) + b(u,v) = 0, \quad \forall v \in V,  
\end{equation}
where $a(\cdot,\cdot), b(\cdot,\cdot) : V \times V \mapsto \Reals{}$ are the mappings defined as follow, respectively:  
\begin{subeqnarray}
a(u,v) &=& \int_\Omega \nabla u \cdot  \nabla v \, dx + \int_{\Gamma_{1}} \frac{\alpha}{\beta} u v \, ds, \quad \forall u, v \in V \\
b(u,v) &=& \int_\Omega f(x,u) v\, dx  - \int_{\Gamma_{1}} g v\, ds, \quad \forall u, v \in V. 
\end{subeqnarray}
%We shall assume that the data $g$ is smooth. For the special case when $\Gamma_D = \partial \Omega$, the equation \eqref{main:eq} reduces to find $u \in V$ such that 
%\begin{equation}\label{dirichlet} 
%\int_\Omega \nabla u \cdot \nabla v \, dx + \int_\Omega f(x,u) %v\, dx = 0, \quad \forall v \in V. 
%\end{equation}
%We can also write the equation \eqref{main:eq} in an operator form as follows: Find $u \in V$ such that 
%\begin{equation} 
%\mathcal{N}(u) = \mathcal{A}u + \mathcal{B}(u) = 0.
%\end{equation}
%Note that by writing $\mathcal{A} u$, it means that it is linear in $u$, while $\mathcal{B}(u)$ can be nonlinear in $u$. 

\section{Finite element formulation and a priori error analysis}\label{fem} 

We will utilize a finite element method to solve \eqref{main:eq}, specifically a conforming finite element of degree $r \geq 1$. The triangulation of the domain $\Omega$ will be denoted by  $\mathcal{T}_h = \{T\}_{i=1,\cdots,N_e}$. As usual, we define
\begin{equation}
h = {\rm{max}}_{T \in \mathcal{T}_h}{\rm diam}(T).
\end{equation} 
Let $V_h$ be the subspace of $V$ that is composed of piecewise globally continuous polynomials of degree $r \geq 1$. We shall denote the set of vertices of $\mathcal{T}_h$ by $\mathcal{V}_h$. Then, the dimension of $V_h$ is denoted by $N_h$, the total number of interior vertices and the space $V_h$ can be expressed as follows:
\begin{equation} 
V_h = {\rm span} \{ \phi_h^i : i = 1,\cdots, N_h\},
\end{equation} 
where $\phi_h^i$ is the nodal basis on the triangulation $\mathcal{T}_h$, i.e., 
\begin{equation}
\phi_h^i(x_j) = \delta_{ij}, \quad \forall x_j \in \mathcal{V}_h.
\end{equation}
The discrete weak formulation for \eqref{main:eq} is given as: Find $u_h \in V_h$ such that
\begin{equation}\label{main:heq} 
\mathcal{F}(u_h,v_h) = a(u_h,v_h) + b(u_h,v_h) = 0, \quad \forall v_h \in V_h.  
\end{equation}
We note that for any $u_h \in V_h$, there exists a unique $\tenq{u}_h = (u_h^1, \cdots, u_h^{N_h})^T \in \Reals{N_h}$ such that 
\begin{equation}\label{expression}
u_h = \sum_{i=1}^{N_h} u_h^i \phi_h^i.   
\end{equation}
To obtain a solution $u_h$ to \eqref{main:heq}, we  need to solve the following system of nonlinear equations: 
\begin{equation}\label{goal1}
\tenq{F}_h(\tenq{u}_h) = \left ( 
\begin{array}{c}
F_h^1(\tenq{u}_h) \\ 
\vdots \\ 
F_h^{N_h}(\tenq{u}_h)
\end{array} \right ) 
= \tenq{0},  
\end{equation}
where
\begin{equation}\label{system}
F_h^i(\tenq{u}_h) := a(u_h, \phi_h^i) + b(u_h, \phi_h^i) = 0, \quad \forall i = 1,\cdots,N_h. 
\end{equation}

\subsection{A priori error analysis} 
In this section, we will discuss the convergence order of the finite element solutions for solving \eqref{main:eq}. Throughout this section, we introduce a notation for a fixed $\delta > 0$:
\begin{equation}
N_\delta = \{v \in V : \|u - v\|_{1,2} < \delta \},  
\end{equation}
where $u$ is the solution to the equation \eqref{main:eq}. We will make the following assumption:
\begin{assump}\label{assc} 
There exists a solution $u \in W^{1,2}(\Omega) \cap C^1(\Omega)$ of the problem \eqref{main:eq} and there is a constant $\Gamma$ such that $\|u\|_{0,\infty} \leq \Gamma$ and sufficiently smooth. Furthermore, in particular, $u$ is isolated in the following sense: there exists $\delta = \delta_u > 0$ such that for all $w \in V$ such that $0 \neq \|w\|_{1,2} < \delta$, there exist $\eta_u > 0$ and $v_w \in V$, such that 
\begin{equation}\label{iso1} 
\left |\mathcal{F}(u + w,v_w) \right | \geq \eta_u \|w\|_{1,2}\|v_w\|_{1,2} > 0.   
\end{equation} 
%Furthermore, the discrete solution $u_h$ satisfies $\|u_h\|_{0,\infty}\leq \Gamma$. 
\end{assump}
\begin{remark}
We note that, to the best of our knowledge, this is the first time the notion of isolation has been introduced in the literature. In \cite{chen2008analysis}, a similar definition is presented, but it allows $v$ to be any function in $V$, not necessarily dependent on $w$ in \eqref{iso1}. This can lead to several issues. To illustrate one of the issues, we consider the problem of solving the following equation:
\begin{equation}\label{ex} 
-u'' - u^p = 0 \quad \mbox{ in } \quad \left ( 0, \frac{2\pi}{\sqrt{p}} \right ) \quad \mbox{ and } \quad u(0)=u \left ( \frac{2\pi}{\sqrt{p}} \right ) = 0.   
\end{equation}
The weak form is given by $\mathcal{F}(u,v) = \int_\Omega \nabla u \nabla v dx -\int_\Omega u^p v dx=0$ and $u = 0$ is an isolated solution in the sense of  \eqref{iso1}, namely, for any $w \in V$, we choose $v_w = w$, so that we have 
\begin{equation}
\left |\mathcal{F}(u + \varepsilon w, w)\right |= \left |\int_\Omega \nabla (\varepsilon w) \nabla w - \varepsilon^p \int_\Omega w^{p+1} \right | \geq \eta_u  \|\varepsilon w\|_{1,2}\|w\|_{1,2},
\end{equation}
for some $\eta_u$, no matter how $\varepsilon$ is small, due to Poincaré's inequality and Sobolev embedding, i.e., ($\|w\|_{0,p+1} \lesssim \|w\|_{1,2}$ in 1D). On the other hand, if we choose $w = \cos(\sqrt{p}x) - 1$, $0 < \varepsilon \ll 1$ and $v = \sin(\sqrt{p}x)$, then we have that 
\begin{equation}
\mathcal{F}(u+\varepsilon w,v)=\varepsilon \int_\Omega  \nabla w \nabla v dx-\varepsilon^p \int_\Omega  w^p v dx =-\varepsilon^p \int_\Omega  w^p v dx=0,      
\end{equation}
which implies that $0= \left |\mathcal{F}(u + \varepsilon w,v) \right | \geq \eta_u \varepsilon \|w\|_{1,2}\|v\|_{1,2}=O(\varepsilon)$. This will not make sense. 
\end{remark}

We begin with the following lemma as a consequence of our assumption:
\begin{lemma}\label{lem-ma} 
Under the assumption that $\|u_h\|_{0,\infty}, \|u\|_{0,\infty} \leq \Gamma$, we have 
\begin{equation} 
\|f^{(1)}\|_{0,\infty}, \|f^{(2)}\|_{0,\infty} < C (\Gamma),  
\end{equation} 
where $C$ is a constant that depends on $\Gamma$.  
\end{lemma}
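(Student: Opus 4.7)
The plan is to exploit two facts that are already in force from the governing equations section: first, the nonlinearity $f(x,u)$ is smooth in $u$ (in fact polynomial, from the introduction), with coefficients that are themselves regular functions on $\overline{\Omega}$; second, the hypothesis of the lemma confines the argument $u$ (or $u_h$) to the compact interval $[-\Gamma,\Gamma]$. A bound on the derivatives $f^{(1)}$ and $f^{(2)}$ follows immediately from continuity on a compact set.

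More concretely, I would first make explicit the interpretation of the norms: since $f^{(k)}$ is a function of the two variables $(x,u)$, the quantity $\|f^{(k)}\|_{0,\infty}$ in the lemma is to be read as $\sup_{x\in\overline{\Omega}} |f^{(k)}(x,w(x))|$ where $w$ stands for either $u$ or $u_h$. Because $\|w\|_{0,\infty}\le \Gamma$, we have $(x,w(x))\in \overline{\Omega}\times[-\Gamma,\Gamma]$ for every $x\in\overline{\Omega}$, so the quantity is dominated by
\begin{equation}
\sup_{(x,t)\in \overline{\Omega}\times[-\Gamma,\Gamma]} \bigl|f^{(k)}(x,t)\bigr|, \qquad k=1,2.
\end{equation}

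Next I would invoke smoothness of $f$ in its second argument (and the regularity of $f$ in $x$ that is implicit from the polynomial structure with smooth coefficients), which implies that $f^{(1)}$ and $f^{(2)}$ are continuous on $\overline{\Omega}\times\mathbb{R}$. Since $\overline{\Omega}\times[-\Gamma,\Gamma]$ is compact (here the polyhedral boundedness of $\Omega$ is essential), each $\sup$ above is finite and depends only on $\Gamma$, on the coefficients of the polynomial $f$, and on $\Omega$. Setting $C(\Gamma)$ to be any strict upper bound for this maximum over $k=1,2$ yields the stated inequality.

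No real obstacle is expected; the only mild subtlety is making the notational convention for $\|f^{(k)}\|_{0,\infty}$ precise, since strictly speaking the derivatives are functions of $(x,u)$ and not of $x$ alone. Once we agree that they are evaluated along the (bounded) solutions, the argument reduces to continuity of a smooth function on a compact product domain, so in fact a single constant $C(\Gamma)$ simultaneously bounds both $\|f^{(1)}\|_{0,\infty}$ and $\|f^{(2)}\|_{0,\infty}$, whether the evaluation is along $u$ or along $u_h$.
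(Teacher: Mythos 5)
Your argument is correct and is precisely the (omitted) reasoning the paper intends: the lemma is stated as an immediate consequence of the $L^\infty$ bounds, and the standard justification is exactly your compactness argument, namely that the smooth (polynomial) nonlinearity and its $u$-derivatives are continuous on the compact set $\overline{\Omega}\times[-\Gamma,\Gamma]$, to which the arguments $(x,u(x))$ and $(x,u_h(x))$ are confined. Your clarification of how $\|f^{(k)}\|_{0,\infty}$ is to be read along the solutions is a useful precision rather than a deviation.
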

We shall now consider the linearized problem for a given isolated solution $u$ to the equation \eqref{main:eq}: For $q \in V^*$, find $w \in V$ such that 
\begin{equation}
A(u;w,v) := a(w,v) + \int_\Omega f^{(1)}(u) w v \, dx =(q,v), \quad \forall v \in V. \label{linear_prob}
\end{equation}
This corresponds to the following partial differential equation: find $w$ such that 
\begin{equation}
-\Delta w + f^{(1)}(u) w = q \quad \mbox{ in } \Omega,  
\end{equation}
subject to the same type of boundary condition to the equation \eqref{main:eq}, but with $g$ replaced by zero function in \eqref{main:bdry}. We shall assume that the solution to the equation \eqref{linear_prob} satisfies the full elliptic regularity \cite{schatz1974observation}, i.e., 
\begin{equation}\label{ass2} 
\|w\|_{2,2} \lesssim \|q\|_{0,2}. %\quad \textcolor{red}{ \|w\|_{1+\mu,2} \lesssim \|q\|_{0,2}}, 
\end{equation}
%\textcolor{red}{Here we need a comment !}
We shall now establish the well-posedness of the linearized problem as follows. 
\begin{lemma}\label{lem:1_2} 
$A(u;\cdot,\cdot)$, defined in Eq. (\ref{linear_prob}), satisfies the inf-sup condition, i.e., 
\begin{equation}
\inf_{w\in V} \sup_{v \in V} \frac{A(u;v,w)}{\|v\|_{1,2}\|w\|_{1,2}} = \inf_{v\in V} \sup_{w \in V} \frac{A(u;v,w)}{\|v\|_{1,2}\|w\|_{1,2}} \gtrsim 1.   \label{inf_sup_cont}
\end{equation}
\end{lemma}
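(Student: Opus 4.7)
The plan is to exploit the symmetry of $A(u;\cdot,\cdot)$ to reduce both inf-sup bounds in \eqref{inf_sup_cont} to a single estimate, and then to derive that estimate by linking the linearized form $A$ to the nonlinear operator $\mathcal{F}$ through a second-order Taylor expansion around $u$. Since $a(w,v)$ and $\int_\Omega f^{(1)}(u)\,wv\,dx$ are both symmetric in $(w,v)$, the form $A(u;\cdot,\cdot)$ is symmetric, so the two quantities in \eqref{inf_sup_cont} coincide; hence it suffices to bound $\inf_{w} \sup_{v} A(u;w,v)/(\|w\|_{1,2}\|v\|_{1,2})$ from below.

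The key identity I would use, available because $\mathcal{F}(u,v)=0$, is
\begin{equation*}
\mathcal{F}(u+w,v) \;=\; A(u;w,v) + R(u;w,v), \qquad R(u;w,v) = \int_\Omega \tfrac{1}{2} f^{(2)}(\xi)\, w^2 v \, dx,
\end{equation*}
for some intermediate value $\xi$ between $u$ and $u+w$, obtained by Taylor-expanding $f(\cdot,u+w)$ in its second argument. By Lemma \ref{lem-ma}, $\|f^{(2)}\|_{0,\infty}\leq C(\Gamma)$, and the Sobolev embedding $H^1(\Omega)\hookrightarrow L^4(\Omega)$ valid for $d\leq 3$ yields $|R(u;w,v)|\leq C_R \|w\|_{1,2}^2\,\|v\|_{1,2}$.

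Now, for any $w\in V$ with $0<\|w\|_{1,2}<\delta$, the isolation assumption \eqref{iso1} supplies $v_w\in V$ satisfying $|\mathcal{F}(u+w,v_w)|\geq \eta_u\|w\|_{1,2}\|v_w\|_{1,2}$. Combining this with the remainder estimate via the reverse triangle inequality,
\begin{equation*}
|A(u;w,v_w)| \;\geq\; |\mathcal{F}(u+w,v_w)| - |R(u;w,v_w)| \;\geq\; (\eta_u - C_R\|w\|_{1,2})\,\|w\|_{1,2}\|v_w\|_{1,2}.
\end{equation*}
Shrinking $\delta$ if necessary so that $C_R\delta \leq \eta_u/2$, I obtain $|A(u;w,v_w)|\geq (\eta_u/2)\|w\|_{1,2}\|v_w\|_{1,2}$ on this small-norm regime. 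For an arbitrary nonzero $w\in V$, rescaling $\tilde w := (\delta/2)\,w/\|w\|_{1,2}$ places $\tilde w$ in the small-norm regime, and the bilinearity of $A(u;\cdot,\cdot)$ then lifts the bound from $\tilde w$ to $w$, producing $\sup_{v\in V} A(u;w,v)/\|v\|_{1,2} \geq (\eta_u/2)\,\|w\|_{1,2}$, as required.

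The main obstacle is converting the nonlinear, non-quantitative isolation hypothesis into a quantitative linear inf-sup bound. The crux is absorbing the quadratic remainder $R$ into the linear lower bound, which relies on the $L^\infty$ control on $f^{(1)}$ and $f^{(2)}$ from Lemma \ref{lem-ma} together with the Sobolev embedding $H^1 \hookrightarrow L^4$ available in the admissible dimensions $d\in\{1,2,3\}$. Once this absorption is secured on a sufficiently small $H^1$-neighborhood of $u$, the bilinear scaling of $A$ makes the extension to arbitrary $w\in V$ routine, and symmetry finishes off the second inf-sup.
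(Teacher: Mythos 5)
Your argument is essentially correct, but it reaches \eqref{inf_sup_cont} by a genuinely different route than the paper. The paper follows the Babu\v{s}ka template cited in \cite{babuska1972survey}: it verifies a Garding-type inequality (via the Poincar\'e inequality) together with injectivity of the linearized operator, the latter by contradiction --- a nonzero kernel element $w$ would force $\mathcal{F}(u+\epsilon w, v_w)$ to be $O(\epsilon^2)$ by Taylor expansion, contradicting the $O(\epsilon)$ lower bound from \eqref{iso1} --- and then invokes the compactness/Fredholm-type implication to obtain the inf-sup condition with a non-explicit constant. You instead bypass that machinery and extract the constant directly: the identity $\mathcal{F}(u+w,v)=A(u;w,v)+R(u;w,v)$ with $\mathcal{F}(u,v)=0$, the reverse triangle inequality against \eqref{iso1}, absorption of the quadratic remainder for $\|w\|_{1,2}$ small, and the homogeneity of $A(u;\cdot,\cdot)$ in its first slot to pass from the small-norm regime to arbitrary $w$, with symmetry of $A$ disposing of the second inf-sup. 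Your route buys an explicit constant ($\eta_u/2$ after shrinking $\delta$) and makes transparent exactly how Assumption \ref{assc} is being used, whereas the paper's route is the standard well-posedness template and separates cleanly into a coercivity-modulo-compactness step and a uniqueness step. Two caveats apply to both arguments equally and deserve a sentence in a careful write-up: (i) the bound $\|f^{(2)}\|_{0,\infty}\leq C(\Gamma)$ from Lemma \ref{lem-ma} is stated under an $L^\infty$ bound on the argument of $f^{(2)}$, but here $f^{(2)}$ is evaluated at $\xi$ between $u$ and $u+w$ with $w$ only small in $H^1$, so for a general polynomial $f$ one should instead use the generalized H\"older inequality (as the paper does) together with the subcritical growth condition \eqref{bdf}; the remainder is then still $o(\|w\|_{1,2}\|v\|_{1,2})$ and your absorption survives, possibly with a higher power of $\|w\|_{1,2}$; and (ii) both proofs implicitly require the constant $\eta_u$ in \eqref{iso1} to be uniform over the admissible perturbations $w$, which the quantifier order in Assumption \ref{assc} leaves slightly ambiguous.
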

\begin{proof}
Based on \cite{babuska1972survey}, we need to prove that
 \begin{itemize}
\item[(i)] there exists a unique zero solution, $w = 0$, to $A(u;w,v) = 0$ for all $v \in V$;    
\item[(ii)] $A(u;\cdot,\cdot)$ satisfies the Garding-type inequality, i.e.,  there exist $\gamma_0, \gamma_1 > 0$ such that 
\begin{equation*}
|A(u;v,v)| \geq \gamma_0 \|v\|_{1,2}^2 - \gamma_1 \|v\|_{0,2}^2, \quad \forall v \in V, 
\end{equation*}
\end{itemize}
For the second condition, the Garding-type inequality holds due to the Poincare inequality \cite{evans1990weak}. Secondly, we will prove the first condition using the proof by contradiction. Let us assume that there exists a non-zero solution  $w \in V$ such that 
\begin{equation}
A(u;w,v) = 0, \quad \forall v \in V.  
\end{equation}
Then, for $\epsilon$ sufficiently small, we define $u_\epsilon = u + \epsilon w \in N_{\delta}(u)$. Then, by Assumption \ref{assc}, we can choose $v_w \in V$, for which the inequality \eqref{iso1} holds and observe that 
\begin{eqnarray*}
\mathcal{F}(u + \epsilon w,v_w) &=& a(u + \epsilon w, v_w) + b(u + \epsilon w, v_w) = a(u + \epsilon w, v_w) \\
&& \quad  + \int_\Omega \left [ f(u) + f^{(1)}(u)(\epsilon w) + \frac{1}{2} f^{(2)}(\xi)(\epsilon w)^2 \right ]  v_w\, dx - \int_{\Gamma_{D_{_c}}} g v_w \, ds \\
&=& a(u,v_w) + b(u,v_w) + A(u,\epsilon w,v_w) + \frac{1}{2} \epsilon^2 \int_\Omega f^{(2)}(\xi) w^2 v_w \, dx \\
%&=& \frac{1}{2} \epsilon^2 \int_\Omega f^{(2)}(\xi) w^2 v_w \, dx \\
%&\leq& \frac{1}{2} \epsilon^2 \left ( \int_\Omega |f^{(2)}|^{p_1} \, dx \right )^{1/p_1} \left ( \int_\Omega |w|^{2p_2}\, dx \right )^{1/p_2} 
%\left ( \int_\Omega |v|^{p_3}\, dx \right )^{1/p_3} \\
&\leq& \frac{\epsilon^2}{2} \|f^{(2)}\|_{0,p_1} \|w\|_{0,2p_2}^2 \|v_w\|_{0,p_3},  
\end{eqnarray*}
where the last inequality used the generalized H\"older inequality with $p_i$ for $i=1,2,3$, satisfying the identity
$
\frac{1}{p_1} + \frac{1}{p_2} + \frac{1}{p_3} = 1.$ Since we can choose $\epsilon$ to be arbitrarily small, this contradicts Assumption \ref{assc}. Thus, the proof is complete.

\end{proof}

% We remark that this problem has a unique solution \cite{aziz2014mathematical, werschulz1997complexity} due to the inf-sup condition.
%\begin{lemma}\label{newton_2}
%We assume that $u$ is an isolated solution satisfying  Assumption \ref{assc}. Then, the following homogeneous linearized problem has a unique solution  $w = 0$, 
%\begin{equation}\label{res_2} 
%A(u;w,v) = 0, \quad \forall v\in V. 
%\end{equation} 
%\end{lemma} 
%\begin{corollary}\label{lem:2_2} 
%We assume that $u$ is an isolated solution satisfying  Assumption %\ref{assd}. Then, the following homogeneous linearized problem has a %unique solution $w_h = 0 \in V_h$, 
%\begin{equation}\label{res_3} 
%A(u_h;w_h,v_h) = 0, \quad \forall v_h \in V_h. 
%\end{equation} 
%\end{corollary}
After establishing the inf-sup condition for the linearized equation at the continuous level, we can establish the discrete inf-sup condition using the standard techniques such as the $W^{1,2}$ stability and $L^2$ norm error estimate of Ritz-projection for sufficiently small $h$ (see \cite{brenner2008mathematical,leykekhman2017maximum}).
\begin{lemma}
Under the Assumption \ref{assc}, the following discrete inf-sup condition holds if $h < h_0$ for sufficiently small $h_0$. Specifically, there exists $\alpha_0$, which is independent of $h$, such that  
\begin{equation}\label{infsupd} 
\inf_{w_h\in V_h} \sup_{v_h \in V_h} \frac{A(u;v_h,w_h)}{\|v_h\|_{1,2}\|w_h\|_{1,2}} = \inf_{v_h\in V_h} \sup_{w_h \in V_h} \frac{A(u;v_h,w_h)}{\|v_h\|_{1,2}\|w_h\|_{1,2}} = \alpha_0 > 0.  
\end{equation}
\end{lemma}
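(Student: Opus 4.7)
The plan is to follow a Schatz-type duality argument that promotes the continuous inf-sup of Lemma \ref{lem:1_2} to the discrete level for sufficiently small $h$, using the three ingredients already in hand: the continuous inf-sup itself, the Garding-type inequality established inside the proof of Lemma \ref{lem:1_2}, and the full elliptic regularity \eqref{ass2} for the adjoint of the linearized operator together with standard interpolation error estimates for conforming finite elements of degree $r\ge 1$. Given $v_h\in V_h$ arbitrary, the goal is to produce $w_h\in V_h$ such that $A(u;v_h,w_h)\gtrsim \|v_h\|_{1,2}\|w_h\|_{1,2}$; equivalently, writing
\begin{equation*}
M_h(v_h) := \sup_{0\neq w_h\in V_h}\frac{A(u;v_h,w_h)}{\|w_h\|_{1,2}},
\end{equation*}
it suffices to show $\|v_h\|_{1,2}\lesssim M_h(v_h)$ uniformly in $h<h_0$.

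First I would prove an Aubin--Nitsche-type $L^2$ control of the form
\begin{equation*}
\|v_h\|_{0,2}\;\lesssim\; h\,\|v_h\|_{1,2}+M_h(v_h).
\end{equation*}
For this, set up the dual problem: find $z\in V$ with $A(u;\varphi,z)=(v_h,\varphi)_{0,2}$ for all $\varphi\in V$. The continuous inf-sup (applied in the transposed direction) together with the regularity hypothesis \eqref{ass2} for the adjoint operator yields $\|z\|_{2,2}\lesssim \|v_h\|_{0,2}$; here one uses that $f^{(1)}(\cdot,u)\in L^\infty$ by Lemma \ref{lem-ma} and that the boundary conditions inherited from \eqref{main:bdry} with $g\equiv 0$ are of the type covered by \eqref{ass2}. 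Let $z_h\in V_h$ denote a standard quasi-interpolant with $\|z-z_h\|_{1,2}\lesssim h\|z\|_{2,2}$. Splitting
\begin{equation*}
\|v_h\|_{0,2}^2 = A(u;v_h,z) = A(u;v_h,z-z_h) + A(u;v_h,z_h),
\end{equation*}
the first term is bounded by $C\,\|v_h\|_{1,2}\cdot h\|v_h\|_{0,2}$ using continuity of $A(u;\cdot,\cdot)$ on $V\times V$ (which follows from Cauchy--Schwarz and Lemma \ref{lem-ma}), and the second by $M_h(v_h)\cdot\|z_h\|_{1,2}\lesssim M_h(v_h)\,\|v_h\|_{0,2}$. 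Dividing through by $\|v_h\|_{0,2}$ gives the desired $L^2$ bound.

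Next I would combine this with the Garding inequality $|A(u;v_h,v_h)|\geq \gamma_0\|v_h\|_{1,2}^2-\gamma_1\|v_h\|_{0,2}^2$ (the very bound established in the proof of Lemma \ref{lem:1_2}), writing $A(u;v_h,v_h)\leq M_h(v_h)\|v_h\|_{1,2}$, to obtain
\begin{equation*}
\gamma_0\|v_h\|_{1,2}^2 \;\leq\; M_h(v_h)\,\|v_h\|_{1,2} \;+\; \gamma_1\bigl(Ch\|v_h\|_{1,2}+CM_h(v_h)\bigr)^2 .
\end{equation*}
Choosing $h_0$ so small that $2\gamma_1 C^2 h_0^2<\gamma_0/2$ absorbs the $h^2\|v_h\|_{1,2}^2$ contribution into the left-hand side, and a Young's-inequality step on the cross term $M_h(v_h)\|v_h\|_{1,2}$ then yields $\|v_h\|_{1,2}\leq C'\,M_h(v_h)$ with $C'$ independent of $h$, which is exactly the left inf-sup in \eqref{infsupd}; setting $\alpha_0:=1/C'$ gives the stated lower bound. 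The equality of the two inf-sup quantities in \eqref{infsupd} follows by running the identical argument on the transposed form $A(u;\cdot,\cdot)^{\top}$, whose continuous inf-sup is also guaranteed by Lemma \ref{lem:1_2}.

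The main obstacle I anticipate is verifying that the dual problem inherits full elliptic regularity in the sense of \eqref{ass2}; this hinges on the $L^\infty$ control of $f^{(1)}(\cdot,u)$ from Lemma \ref{lem-ma}, on $u$ being smooth enough (Assumption \ref{assc}), and on checking the compatibility of the mixed/Robin boundary data after transposition. Once this regularity is in place, the remainder is bookkeeping with continuity of $A$, interpolation error for $V_h$, and the absorption-of-constants argument above.
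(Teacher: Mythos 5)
Your proof is correct, but it takes a genuinely different route from the paper's. The paper promotes the continuous inf-sup condition \eqref{inf_sup_cont} to the discrete level by a perturbation argument built on the Ritz projection $R_h$ associated with the principal part $a(\cdot,\cdot)$: for $w_h\in V_h$ one bounds $\|w_h\|_{1,2}$ by $\sup_{v\in V}A(u;w_h,v)/\|v\|_{1,2}$, splits $v=(v-R_hv)+R_hv$, uses the Galerkin orthogonality $a(w_h,v-R_hv)=0$ to kill the leading term, bounds the zeroth-order remainder by $h\|w_h\|_{1,2}$ via the $L^2$ error estimate for $R_h$, and uses the $W^{1,2}$ stability of $R_h$ to convert the surviving supremum into a discrete one; a kick-back for $h<h_0$ finishes. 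You instead run the classical Schatz argument: an Aubin--Nitsche duality step yielding $\|v_h\|_{0,2}\lesssim h\|v_h\|_{1,2}+M_h(v_h)$, followed by absorption of the $L^2$ term into the Garding inequality. The two arguments consume essentially the same hypotheses --- well-posedness of the linearized problem (Lemma \ref{lem:1_2}), boundedness of $f^{(1)}(u)$, and some regularity --- but yours requires the full $H^2$ regularity \eqref{ass2} for the dual problem, whereas the paper's needs only the stability and first-order $L^2$ convergence of the Ritz projection (which rest on a weaker duality). One simplification available to you: since $A(u;\cdot,\cdot)$ is symmetric (both $a(\cdot,\cdot)$ and the zeroth-order term are), the adjoint problem coincides with the primal one, so the transposition and compatibility issues you flag at the end do not actually arise, and the equality of the two inf-sup quantities in \eqref{infsupd} is automatic.
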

This finding has implications for the well-posedness of the Newton method used to find solutions. For a more in-depth discussion of using the Newton method to find multiple solutions,  see \cite{neuberger2001newton,rabinowitz1986minimax,wang2004local} and the references cited therein. We shall now consider the solution operator for $A(u;\cdot,\cdot)$ and the error estimates. First we define the projection operator $\pi_h : V \mapsto V_h$ as
\begin{equation}\label{solop} 
A(u;\pi_hw,v_h) = A(u;w,v_h), \quad \forall v_h \in V_h. 
\end{equation}
\begin{lemma}\label{lem:4_2} 
For the projection operator, we have with $r \geq 1$, 
\begin{equation}
\|w - \pi_hw\|_{0,2} \lesssim h^{r+1}\|w\|_{{r+1},2} \quad \mbox{ and } \quad \|w - \pi_hw\|_{1,2} \lesssim h^r \|w\|_{r+1,2}. 
\end{equation}
\end{lemma}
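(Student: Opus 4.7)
The plan is to run the standard Céa plus Aubin--Nitsche argument, but with coercivity replaced by the discrete inf-sup condition \eqref{infsupd} just established, together with the full elliptic regularity \eqref{ass2} and the symmetry of $A(u;\cdot,\cdot)$ (both $a(\cdot,\cdot)$ and the zeroth-order term $\int_\Omega f^{(1)}(u)\,\cdot\,\cdot\,dx$ are symmetric in their arguments).

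First, I would establish the $W^{1,2}$ bound by proving quasi-optimality. By the definition \eqref{solop} of $\pi_h$, Galerkin orthogonality holds: $A(u;w-\pi_h w,\phi_h)=0$ for every $\phi_h\in V_h$. For an arbitrary $v_h\in V_h$ and any $\phi_h\in V_h$, continuity of $A(u;\cdot,\cdot)$ on $V\times V$ (which follows from Lemma \ref{lem-ma}, since $\|f^{(1)}\|_{0,\infty}$ is bounded) gives
\[
A(u;\pi_h w-v_h,\phi_h)=A(u;w-v_h,\phi_h)\lesssim \|w-v_h\|_{1,2}\|\phi_h\|_{1,2}.
\]
Taking the supremum over $\phi_h$ and invoking the discrete inf-sup \eqref{infsupd} yields $\|\pi_h w-v_h\|_{1,2}\lesssim \|w-v_h\|_{1,2}$, and a triangle inequality produces the quasi-optimal bound $\|w-\pi_h w\|_{1,2}\lesssim \inf_{v_h\in V_h}\|w-v_h\|_{1,2}$. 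Choosing $v_h$ to be the Scott--Zhang (or Lagrange) interpolant of $w$ into the degree-$r$ conforming space and applying the standard interpolation estimate finishes this part: $\inf_{v_h\in V_h}\|w-v_h\|_{1,2}\lesssim h^r\|w\|_{r+1,2}$.

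Second, I would obtain the $L^2$ bound by duality. Set $e:=w-\pi_h w$ and consider the adjoint problem: find $z\in V$ with
\[
A(u;v,z)=(e,v)_{0,2}\quad\text{for all }v\in V.
\]
Because $A(u;\cdot,\cdot)$ is symmetric, this is the same operator as in \eqref{linear_prob}, so full elliptic regularity \eqref{ass2} gives $\|z\|_{2,2}\lesssim \|e\|_{0,2}$. Testing with $v=e$ and using Galerkin orthogonality to subtract $\pi_h z$,
\[
\|e\|_{0,2}^{2}=A(u;e,z)=A(u;e,z-\pi_h z)\lesssim \|e\|_{1,2}\,\|z-\pi_h z\|_{1,2}.
\]
Applying the $W^{1,2}$ quasi-optimality to $z$ with $r=1$ yields $\|z-\pi_h z\|_{1,2}\lesssim h\|z\|_{2,2}\lesssim h\|e\|_{0,2}$, while the $W^{1,2}$ estimate already proved gives $\|e\|_{1,2}\lesssim h^{r}\|w\|_{r+1,2}$. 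Substituting and dividing by $\|e\|_{0,2}$ delivers the desired $h^{r+1}$ rate.

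There is no deep obstacle; both estimates are textbook once the discrete inf-sup of the previous lemma is in hand. The one place that deserves care is the duality step: I need the adjoint problem to inherit the same full elliptic regularity as the primal problem. This is automatic here because the linearized form is symmetric, so the adjoint is literally \eqref{linear_prob} with a different right-hand side, and \eqref{ass2} applies verbatim; the boundary condition on $z$ is the same mixed condition that was used to set up \eqref{linear_prob}, with $g\equiv 0$.
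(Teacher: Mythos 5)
Your proposal is correct and follows essentially the same route as the paper's own argument: quasi-optimality in $W^{1,2}$ derived from the discrete inf-sup condition \eqref{infsupd} plus a Scott--Zhang interpolation estimate, followed by an Aubin--Nitsche duality step that invokes the full elliptic regularity \eqref{ass2} to gain the extra power of $h$ in $L^2$. Your write-up is somewhat more explicit about the Galerkin orthogonality, the symmetry of $A(u;\cdot,\cdot)$, and why the adjoint problem inherits the regularity, but the substance is identical.
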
 
The following inequality shall be needed for the well-posedness and error analysis of the discrete solution, which is well-known to be true, for the Dirichlet boundary condition case, \cite{brenner2008mathematical}.  
\begin{equation}\label{maxnorm1}
\|\pi_hw\|_{0,\infty} \lesssim \|w\|_{0,\infty}, \quad \forall w \in W^{1,2}(\Omega) \cap C^{1}(\Omega).  
\end{equation}
We note that for the mixed boundary condition, it can also be proven to be valid for the special case in 2D \cite{leykekhman2017maximum}, while the result in 3D, seems yet to be proven, while it has been proven to be valid for the pure Neumann bounary case in 3D recently in \cite{li2022maximum}. \begin{comment}
\begin{proof} 
Let us begin by establishing the following estimate from \cite{xu2003some}, which applies to Scott-Zhang interpolation $\Pi_h : V \mapsto V_h$ \cite{scott1990finite}:
\begin{eqnarray*}
\|w - \pi_h w\|_{1,2}^2 \lesssim \inf_{\chi \in V_h} \|w - \chi\|_{1,2}^2 \lesssim \|w - \Pi_h w\|_{1,2}^2 \lesssim h^{2r} \|w\|_{r+1,2}^2,   
\end{eqnarray*}
due to the previously established inf-sup condition in \eqref{infsupd}. From this, it follows that \cite{schatz1974observation}:
\begin{subeqnarray*}
\|w - \pi_h w\|_{1,2} \lesssim h^r \|w\|_{{r+1},2}.  
\end{subeqnarray*}
To obtain the $L^2$ norm error estimate, we apply Nitsche's technique \cite{nitsche1970lineare}. For any given $q \in V^*$, we have 
\begin{eqnarray*}
(q, w - \pi_h w) = A(u;v, w-\pi_h w) = A(u;v - \pi_hv,w-\pi_h w).  
\end{eqnarray*}
Therefore, with $q = w - \pi_h w$,  we get \begin{eqnarray*}
\|w - \pi_h w\|_{0,2}^2 &\lesssim& \|w - \pi_h w\|_{1,2} \|v - \pi_h v\|_{1,2} \lesssim \|w - \pi_h w\|_{1,2} h \|v\|_{2,2},%  \lesssim h \|w - \pi_h w\|_{1,2} \|w - \pi_h w\|_{0,2},  
\end{eqnarray*}
which by applying the elliptic regularity (\ref{ass2}), \cite{schatz1996some}, \cite{schatz1974observation},
results in 
\begin{eqnarray}
\|w - \pi_h w\|_{0,2} \lesssim h \|w - \pi_h w\|_{1,2}. 
\end{eqnarray}
This completes the proof. 
\end{proof} 
\end{comment} 
We can now establish that the discrete problem \eqref{main:heq} admits a unique solution that can approximate the fixed isolated solution $u$ with the desired convergence rate in both $L^2$ and $W^{1,2}$ norm, using the argument employed in \cite{xu1996two}. % for $\|\cdot\|_{1,2}$ norm. 
\begin{theorem}\label{thm} 
Suppose the Assumption \ref{assc} and \eqref{maxnorm1}. Then, for $h < h_0$, with $h_0$ sufficiently small, the finite element equation \eqref{main:heq} admits a solution $u_h$ satisfying 
\begin{equation}
\|u - u_h\|_{k,2} \lesssim h^{r+1-k}, \quad \mbox{ for } k = 0, 1.  
\end{equation}
Furthermore,  $u_h$ is the only solution in $N_\delta$ for some $\delta > 0$. 
\end{theorem}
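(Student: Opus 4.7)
The plan is to prove existence of $u_h$ by a Banach fixed-point argument built around the linearized discrete operator $A(u;\cdot,\cdot)$, and then extract the $H^1$ and $L^2$ error bounds from the resulting a~priori control. The main ingredients are already in hand: the Ritz-type projection $\pi_h$ is well-defined by the discrete inf-sup estimate \eqref{infsupd}, satisfies the interpolation bounds of Lemma~\ref{lem:4_2}, and by \eqref{maxnorm1} remains stable in $L^\infty$. Together with Lemma~\ref{lem-ma} this will keep $f^{(1)}(\xi)$ and $f^{(2)}(\xi)$ uniformly bounded for every $\xi$ that arises in Taylor remainders along the iteration.

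For existence I would write the desired solution as $u_h=\pi_h u+z_h$ with $z_h\in V_h$, and seek a small $z_h$ such that $\mathcal{F}(\pi_h u+z_h,v_h)=0$ for all $v_h\in V_h$. Setting $\eta=(\pi_h u-u)+z_h$, a second-order Taylor expansion of $f(x,\cdot)$ about $u$ combined with $\mathcal{F}(u,\cdot)=0$ and the projection identity $A(u;\pi_h u-u,v_h)=0$ reduces the problem to
\begin{equation*}
A(u;z_h,v_h)=-\tfrac12\int_\Omega f^{(2)}(\xi)\,\eta^2\,v_h\,dx\qquad\forall v_h\in V_h,
\end{equation*}
which, thanks to \eqref{infsupd}, is equivalent to a fixed-point equation $z_h=\Phi(z_h)$ in $V_h$. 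Using Sobolev embedding $W^{1,2}(\Omega)\hookrightarrow L^4(\Omega)$ (which covers $d\le 3$ and is the reason for the growth restriction \eqref{bdf}), Lemma~\ref{lem-ma}, and the inf-sup bound, one obtains $\|\Phi(z_h)\|_{1,2}\lesssim(\|z_h\|_{1,2}+h^r)^2$ and a Lipschitz bound $\|\Phi(z_h^1)-\Phi(z_h^2)\|_{1,2}\lesssim(\|z_h^1\|_{1,2}+\|z_h^2\|_{1,2}+h^r)\|z_h^1-z_h^2\|_{1,2}$. On the ball $B_\rho=\{z_h\in V_h:\|z_h\|_{1,2}\le C h^r\}$, for $h<h_0$ small enough, $\Phi$ maps $B_\rho$ into itself and is a strict contraction, so Banach's theorem yields a unique fixed point $z_h^*$. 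Setting $u_h=\pi_h u+z_h^*$ and using the triangle inequality together with Lemma~\ref{lem:4_2} gives $\|u-u_h\|_{1,2}\lesssim h^r$.

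For the $L^2$ estimate I would apply an Aubin--Nitsche duality adapted to the nonlinear problem. Solve $A(u;v,\phi)=(v,u-u_h)$ for $\phi\in V$ and use \eqref{ass2} to get $\|\phi\|_{2,2}\lesssim\|u-u_h\|_{0,2}$. Exploiting Galerkin orthogonality of $\mathcal{F}(u,\cdot)-\mathcal{F}(u_h,\cdot)$ tested against $\pi_h\phi$ and a second-order Taylor expansion yields
\begin{equation*}
\|u-u_h\|_{0,2}^2\lesssim \|u-u_h\|_{1,2}\,\|\phi-\pi_h\phi\|_{1,2}+\|u-u_h\|_{0,4}^2\,\|\pi_h\phi\|_{0,2}.
\end{equation*}
Lemma~\ref{lem:4_2}, Sobolev embedding, and the just-derived $H^1$ bound make the right-hand side $\lesssim h^{r+1}\|u-u_h\|_{0,2}$ (the quadratic tail contributes $h^{2r}\le h^{r+1}$ since $r\ge 1$), and division gives the claimed $L^2$ estimate. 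Local uniqueness in $N_\delta$ with $\delta=Ch^r$ follows from the contraction: any second solution in a sufficiently small $W^{1,2}$ neighborhood of $u$ would produce a second fixed point of $\Phi$ in $B_\rho$, which the contraction forbids.

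The main obstacle I anticipate is keeping every $\eta$ appearing in the Taylor remainder uniformly bounded in $L^\infty$ so that Lemma~\ref{lem-ma} actually applies and $\|f^{(2)}(\xi)\|_{0,\infty}$ can be absorbed into generic constants. This is precisely the role of the hypothesis \eqref{maxnorm1}: it propagates the $L^\infty$ bound $\Gamma$ of $u$ to $\pi_h u$ and, combined with a careful choice of $\rho=Ch^r$ and Sobolev embedding, to every iterate in $B_\rho$. Without this ingredient the quadratic remainder cannot be controlled by the inf-sup estimate, and the contraction argument breaks; everything else is a routine adaptation of the framework of \cite{xu1996two}.
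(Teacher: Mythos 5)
Your overall strategy is the same as the paper's: both proofs linearize $\mathcal{F}$ at $u$, use the discrete inf-sup condition \eqref{infsupd} and the projection $\pi_h$ of \eqref{solop} to recast the discrete problem as a fixed-point equation for the correction to $\pi_h u$, establish invariance of a ball of radius $O(h^r)$, and finish the $L^2$ bound with an Aubin--Nitsche duality plus a kick-back. The genuine difference is the fixed-point theorem: you run a Banach contraction on $B_\rho=\{z_h:\|z_h\|_{1,2}\le Ch^r\}$, while the paper applies Brouwer's theorem to the map $\Phi$ on a set $B$ defined by \emph{two} constraints, $\|v-\pi_h u\|_{1,2}\le 2C_0h^r$ \emph{and} $\|v\|_{0,\infty}\le\Gamma_0+1$, and then argues uniqueness separately from the inf-sup condition and the continuity of the linearized isomorphism in the linearization point, following the reference given there. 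Your contraction buys existence and uniqueness in one stroke and a slightly cleaner quadratic tail in the duality step ($\|u-u_h\|_{0,4}^2\lesssim h^{2r}\le h^{r+1}$ versus the paper's $\|e\|_{0,2}^{1+\epsilon}$ kick-back); the paper's Brouwer argument buys weaker hypotheses on $\Phi$ (no Lipschitz estimate needed) and makes the $L^\infty$ control an explicit part of the invariant set rather than an afterthought.

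Two points need repair. First, your closing remark that the $L^\infty$ bound propagates to the iterates ``by Sobolev embedding'' is wrong for $d=2,3$: $W^{1,2}(\Omega)\not\hookrightarrow L^\infty(\Omega)$ there, so $\|z_h\|_{1,2}\le Ch^r$ does not by itself control $\|z_h\|_{0,\infty}$. The correct tool is the discrete inverse estimate $\|z_h\|_{0,\infty}^2\lesssim \ell(h)\|z_h\|_{1,2}^2$ with $\ell(h)=1$, $|\log h|^2$, $h^{-1}$ for $d=1,2,3$, which is exactly how the paper verifies the $L^\infty$ constraint of $B$; since $\ell(h)h^{2r}\to 0$ for $r\ge 1$, the fix is immediate, but without it Lemma \ref{lem-ma} cannot be invoked to bound $f^{(2)}(\xi)$ along the iteration and the contraction constant is uncontrolled. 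Second, your uniqueness argument only excludes a second solution whose distance to $\pi_h u$ in $W^{1,2}$ is $O(h^r)$, i.e.\ uniqueness in a ball whose radius shrinks with $h$; the theorem (and the definition of $N_\delta$ with fixed $\delta$) asserts uniqueness in an $h$-independent neighborhood of $u$. Bridging that gap requires an extra step --- e.g.\ showing that any discrete solution in $N_\delta$ must already satisfy the a priori bound and hence land in $B_\rho$, or arguing as the paper does via the stability of the linearized operator under perturbation of the linearization point.
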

\section{Companion-Based Multilevel finite element method (CBMFEM)}\label{CBMFEM}
In this section, we present a companion-based multilevel finite element method to solve the nonlinear system \eqref{goal1}. Solving this system directly is challenging due to the presence of multiple solutions. Therefore, we draw inspiration from the multigrid method discussed in \cite{brandt2011multigrid} that is designed for a single solution. We modify the multilevel finite element method by introducing a local nonlinear solver that computes the eigenvalues of the companion matrix. This enables us to generate a set of initial guesses for Newton's method, which is used to solve the nonlinear system on the refined mesh. 
%\textcolor{red}{YJLEE: The standard multigrid method, the full multigrid method, uses a simple interpolation and thus, it can not generate the multiple solution inherent to the problem. ? The companion matrix is from the original problem restricted to the newly generated node and thus, solving this problem will potentially generate new set of initial guesses.} 

\begin{figure}[!ht]
\centering
\includegraphics[width=0.25\textwidth]{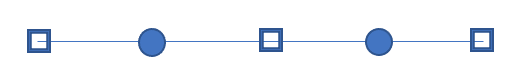}
\hspace{5mm} 
\includegraphics[width=0.25\textwidth]{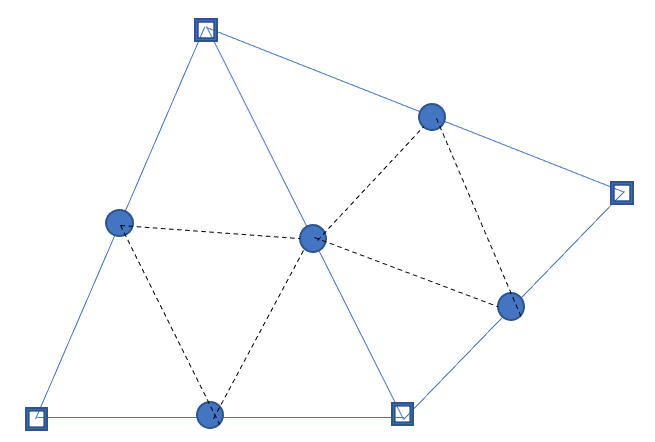}
\caption{Mesh refinement of CBMFEM in 1D (left) and 2D with edge (right). The square dots are the coarse nodes while filled circles are newly introduced fine nodes. }
\label{fig:mesh2}
\end{figure}

We begin by introducing a sequence of nested triangulations, namely,
\begin{equation}
\mathcal{T}_0, \mathcal{T}_1, \mathcal{T}_2, \cdots, \mathcal{T}_N,
\end{equation}
where $\mathcal{T}_0$ and $\mathcal{T}_N$ are the coarsest and the finest triangulations of $\Omega$, respectively. This leads to the construction of a sequence of nested and conforming finite element spaces $\{V_\ell\}_{\ell=0}^N \subset V$, given as follows: 
\begin{equation}
V_0 \subset \cdots \subset V_N. 
\end{equation}
The refinement strategy is shown in Figure \ref{fig:mesh2} for both 1D and 2D cases, where we introduce new nodes (the filled circles) based on the coarse nodes (the square dots).
Specifically, for a given coarse mesh $\mathcal{T}_H$, we obtain the refined mesh $\mathcal{T}_h$ by introducing a new node on $\mathcal{V}_H$. The overall flowchart of CBMFEM is summarized in Figure \ref{fig:mg}.
\begin{figure}[!ht]
\centering
\includegraphics[width=0.48\textwidth]{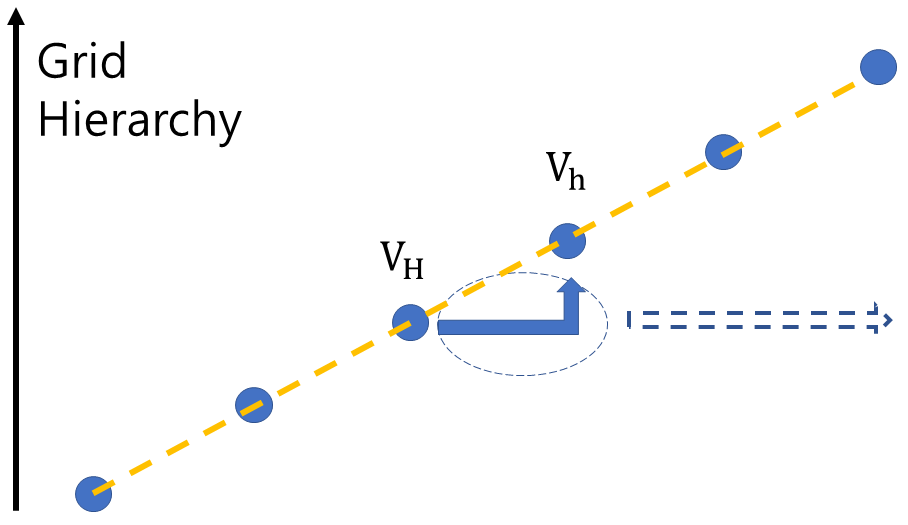}
\includegraphics[width=0.48\textwidth]{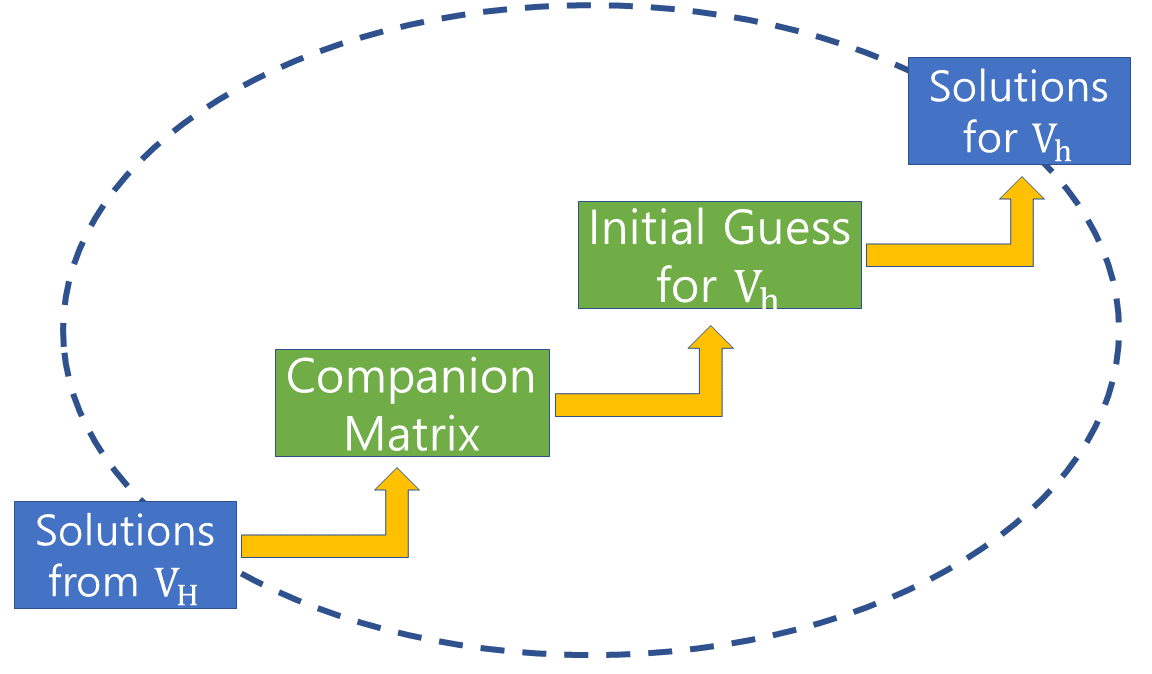}
\caption{A flowchart of the CBMFEM for solving the nonlinear differential equation. The hierarchical structure of CBMFEM is illustrated on the left, where for each level, we obtain a solution on the coarse grid, $V_H$. We then solve local nonlinear equations by constructing companion matrices and generate initial guesses for Newton's method on the finer level $V_h$ on the right.}
\label{fig:mg}
\end{figure}

%\begin{equation}
%\tenq{F}_H(\tenq{u}_H) = \tenq{0} \quad \hbox{~or~} \quad a(u_H,\phi^i_H) + b(u_H,\phi^i_H) = 0, \quad \forall i = 1,\cdots,N_H. 
%\end{equation}

Assuming that a solution $u_H \in V_H$ on a coarse mesh $\mathcal{T}_H$ has been well approximated, namely $\tenq{F}_H(\tenq{u}_H) = \tenq{0}$, we can refine the triangulation $\mathcal{T}_H$ to obtain a finer triangulation $\mathcal{T}_h$.
Since $\mathcal{V}_H \subset \mathcal{V}_h$, we can find a function $\widetilde{u}_h(x)$ in $\mathcal{V}_h$ such that we can express $u_H(x)$ as a linear combination of the basis functions of $\mathcal{V}_h$: 
\begin{equation}
\label{interr}
u_H(x) = \widetilde{u}_h(x)=\sum_{i=1}^{\dim{\mathcal{V}_h}} \widetilde{u}_h^i \phi_h^i(x),
\end{equation}
where $\dim{\mathcal{V}_h}$ is the dimension of $\mathcal{V}_h$ and $\phi_h^i(x)$ are the basis functions of $\mathcal{V}_h$. 
In particular, for any point $x_i \in \mathcal{V}_H$, we have $\widetilde{u}_h^i = u_H(x_i)$. For points $x_i \in \mathcal{V}_h \backslash \mathcal{V}_H$, we can calculate $\widetilde{u}_h^i$ based on the intrinsic structure of the basis functions. In other words, we can interpolate $u_H(x)$ to $u_h(x)$ by using the basis functions on two levels.  Specifically, let's consider 1D case and have the relation $\phi^{i/2}_{H}=\phi^{i/2}_{2h}=\frac{\phi^{i-1}_{h}}{2}+\phi^{i}_{h}+\frac{\phi^{i+1}_{h}}{2}$, which allows us to calculate the coefficients for (\ref{interr}). More precisely, we can write $u_H(x)$ to $u_h(x)$ as follows:
\begin{equation}
u_H(x)=\sum_{x_i \in V_H} {u}_H^{i/2} \phi_H^{i/2}(x)=\sum_{x_i \in V_H} {u}_H^{i/2} \left(\frac{\phi^{i-1}_{h}}{2}+\phi^{i}_{h}+\frac{\phi^{i+1}_{h}}{2}\right)=\sum_{i=1}^{\dim{\mathcal{V}_h}} \widetilde{u}_h^i \phi_h^i(x).
\end{equation}

%So $\sum \widetilde{u}_h^i \phi_h^i$ is just interpolation of $u_H$ from $V_H$ to $V_h$. For $x_i \in \mathcal{V}_h \backslash \mathcal{V}_H$ this points are calculated by $u_H$ but $\tenq{F}_H(\tenq{u}_H) = \tenq{0}$ dose not include any information related to $x_i \in \mathcal{V}_h \backslash \mathcal{V}_H$. We can understand $\widetilde{u}_h^i$ for $x_i \in \mathcal{V}_H $ is more accurate than $\widetilde{u}_h^i$ for $x_i  \in \mathcal{V}_h \backslash \mathcal{V}_H$ because former one was calculated from solving the system at $V_H$ but the other is just calculated from interpolation.

We next update the value of $u_h^i$ for $x_i \in \mathcal{V}_h\backslash \mathcal{V}_H$ on the fine mesh. To do this, we solve $F_h^i(u_h^i;(u_h^j=\widetilde{u}_h^j)_{j\neq i}) = 0$ for $u_h^i$ by  fixing the values of other nodes as $\widetilde{u}_h$. Since $f(x,u)$ is a polynomial, we can rewrite $F_h^i$ as a single polynomial equation, namely,
\begin{equation}\label{polynomial}
F_h^i(u_h^i;(u_h^j=\widetilde{u}_h^j)_{j\neq i})=P_h^i(\widehat{u}_h^i)=0, 
\end{equation}
where $\displaystyle P_h^i(\alpha)=  \sum_{n=0}^mc_{n}\alpha^n $. The companion matrix of $P_h^i(\alpha)$ is defined as   
\begin{equation}\label{companion}
C(P_h^i) = \left[ \begin{array}{ccccc}
0 & 0 &  \ldots & 0& -c_0/c_{m}  \\
1 & 0 & \ldots & 0& -c_1/c_{m}  \\
0 & 1 &  \ldots & 0& -c_2/c_{m}  \\ 
\vdots & \vdots &\ddots & \vdots  &  \vdots \\ 
0 & 0 & \ldots & 1& -c_{m-1}/c_{m}
\end{array} \right] ,
\end{equation}
where, except for $c_0$, the coefficients $c_n$ depend only on  $\{\widetilde{u}_h^j\}$ that are near the point  $x_i \in \mathcal{V}_h\backslash \mathcal{V}_H$ , making their computation local. By denoting the root of Eq. (\ref{polynomial} ) as  $\widehat{u}^i_h $, the initial guess for the solutions on $V_h$ is set as 
\begin{equation}
\label{initialguess}
\widehat{{u}}_h=  \sum \widehat{{u}}_h^i \phi^i_h \hbox{~where~} \widehat{{u}}_h^i = \left\{\begin{array}{rl}u^i_H &  x_i \in \mathcal{V}_H,\\  \widehat{u}^i_h \hbox{~by solving~} (\ref{polynomial})  &  x_i \in \mathcal{V}_h\backslash \mathcal{V}_H.\end{array} \right.
\end{equation}
Since all the eigenvalues of $C(P_h^i)$ satisfy the equation $P_h^i(y_i) = 0$, there can be up to $m^{|\mathcal{V}_h\backslash\mathcal{V}_H|}$ possible initial guesses, where $|\mathcal{V}_h\backslash\mathcal{V}_H|$ denotes the number of newly introduced fine nodes on $\mathcal{V}_h$ and $m$ is the degree of the polynomial \eqref{polynomial}. However, computing all of these possibilities is computationally expensive, so we apply the filtering conditions below to reduce the number of initial guesses and speed up the method:
\begin{itemize}[leftmargin=*]
\item {\bf Locality condition:} we assume the initial guess is  near $\tenq{\tilde{u}_h}$ in term of the residual, namely,
\begin{equation}
    \|\tenq{F}_h(\tenq{\widehat{u}_h})\|_{0,2}<C_1\|\tenq{F}_h(\tenq{\tilde{u}_h})\|_{0,2};
\end{equation}
\item {\bf Convergence condition:} we apply the convergence estimate to the initial guess, namely, \begin{equation}
\|\tenq{F}_h(\tenq{\widehat{u}_h})\|_{0,2}<C_2h^2;
\end{equation}
\item {\bf Boundness condition:}  we assume the initial guess is bounded, namely, \begin{equation}
\|\tenq{\widehat{u}_h}\|_{0,\infty}<C_3.
\end{equation}
\end{itemize}
Finally, we summarize the algorithm of CBMFEM  in {\bf Algorithm \ref{alg:main}}.

\begin{algorithm}
\caption{CBMFEM for computing multiple solutions}\label{alg:main}
Given $V_h, V_H$, and solution $u_H$ on $V_H$.

\begin{algorithmic}

\State{Interpolate $u_H = \sum \widetilde{u}_h^i \phi_h^i$ and compute coefficient $\widetilde{u}_h^i$.}
\For{$i \in V_h\backslash V_{H}$}

\State{Construct the polynomial equation $P_h^i(\widehat{\tenq{u}}_h^i)$}
\State{Compute the eigenvalues of the companion matrix, $C(P_h^i)$} 

\EndFor

\State{Obtain initial guesses in (\ref{initialguess}) on $V_h$ and apply filtering conditions.}

\State{Employ Newton method on $V_h$ with the obtained initial guesses to solve $\tenq F_h(\tenq{u}_h) = 0$}

\end{algorithmic}
\end{algorithm}

\section{Numerical Examples}\label{num} 
In this section, we present several examples for both 1D and 2D to demonstrate the effectivity and robustness of CBMFEM with $r = 1$ for simplicity. We shall let $e_H^h = u_h-u_H$ where $u_h$ and $u_H$ are the numerical solutions with grid step size $h$ and $H$, respectively, and $u$ is the analytical solution. For error analysis, since most of the examples we considered do not have analytical solutions, we used asymptotic error analysis to calculate the convergence rate.% We observed that the convergence rates are consistent with our theoretical proof.

\subsection{Examples for 1D} 

\subsubsection{Example 1}
First, we consider the following boundary value problem
\begin{equation}\label{ex2}
\begin{cases} -u''=(1+u^4) \quad \text{on } \quad[0,1], \\u'(0)=u(1)=0,\end{cases}
\end{equation}
which has analytical solutions \cite{hao2014bootstrapping}. More specifically, by multiplying both side with $u'$ and integrating with respect to $x$,  we obtain
\begin{equation}
\frac{(u'(x))^2}{2}+F(u(x))-F(u_0)=0,
\end{equation}
where $
F(u)=u+\frac{u^5}{5}
$ and 
$u_0=u(0)$. Since $u'(0)=0$ and $u''(x) <0$, then we have
$u'<0$ for all $x>0$. Moreover,  $u(x)>u_0$ implies $F(u_0)>F(x)$ for all $x>0$. 
Therefore
\begin{equation}
     \frac{u'(x)}{\sqrt{F(u_0)-F(u(x))}}=-\sqrt{2}.
\end{equation}
By integrating $x$ from $s$ to $1$, we obtain
\begin{equation}
\int_s^1     \frac{u'(x)}{\sqrt{F(u_0)-F(u(x))}}dx=-\sqrt{2}(1-s).
\end{equation}
 Due to the boundary condition $u(1)=0$, we have
\begin{equation}
    \int_0^{u(s)} \frac{dx}{\sqrt{F(u_0)-F(u(x))}}=\sqrt{2}(1-s).
\end{equation}
By choosing $s=0$, we have the following equation for $u_0$,
\begin{equation}\label{analytic_result}
    \int_0^{u_0} \frac{dx}{\sqrt{F(u_0)-F(u(x))}}=\sqrt{2}
\end{equation}
Then for any given $u_0$, the solution of (\ref{ex2}) is uniquely determined  by the initial value problem 
\begin{equation}
\begin{cases} - v'=(1+u^4)  \\u'=v\end{cases}\hbox{~with~}\begin{cases}v(0)=0\\u(0)=u_0\end{cases}.
\end{equation} 
By solving (\ref{analytic_result}) with Newton's method, we get two solutions $u_0\approx0.5227$ and $u_0\approx1.3084$. 
Then the numerical error  is shown in Table \ref{ex1_MTG} for the CBMEFM with Newton's nonlinear solver.

\begin{figure}[ht!]
\centering
\includegraphics[width=14cm,height=6cm,keepaspectratio]{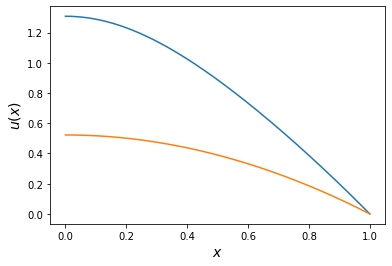}
   \caption{Numerical solutions of Eq. (\ref{ex2}) with $N=1025$ grid points. }
   \label{1d}
\end{figure}

\begin{table}[ht!]
\centering\footnotesize
\begin{tabular}{||c|c|c|c|c|c|c|c|c|c|c|c|}
\hline
  \multirow{2}{1.5em}{h}& \multicolumn{4}{c|}{\# 1st Solution} &\multicolumn{4}{c|}{\# 2nd solution}&\multicolumn{1}{c||}{CPU(s)} \\
\cline{2-10}
& $\|e_h\|_{0,2}$&Order & $\|e_h\|_{1,2}$ & Order  & $\|e_h\|_{0,2}$& Order & $\|e_h\|_{1,2}$ & Order &Newton\\
\hline
$2^{-2}$ &2.6E-04  & x   &1.7E-01 & x    & 8.3E-03 &    x  & 7.0E-02 & x   & 0.10 \\
$2^{-3}$ &6.4E-05  &2.01 &8.0E-02 & 1.00 & 2.0E-03 & 2.04 & 4.0E-02 &1.01   & 0.10 \\
$2^{-4}$ &1.6E-05  &2.00 &4.0E-02 & 1.00 & 5.0E-04 & 2.01 & 2.0E-02 &1.00 & 0.15 \\
$2^{-5}$ &4.0E-06  &2.00 &2.0E-02 & 1.00 & 1.3E-04 & 2.00 & 9.3E-03 &1.00 & 0.16 \\
$2^{-6}$ &1.0E-06  &2.00 &1.0E-02 & 1.00 & 3.1E-05 & 2.00 & 4.7E-03 &1.00 & 0.11 \\
$2^{-7}$ &2.5E-07  &2.00 &5.2E-03 & 1.00 & 7.8E-06 & 2.00 & 2.3E-03 &1.00 &0.16 \\
$2^{-8}$ &6.2E-08  &2.00 &2.6E-03 & 1.01 & 1.9E-06 & 2.00 & 1.2E-03 &1.01 & .25 \\
$2^{-9}$ &1.6E-08  &2.00 &1.3E-03 & 1.03 & 4.9E-07 & 2.00 & 0.6E-03 &1.04 &  0.64 \\
$2^{-10}$&3.9E-09  &2.00 &5.6E-04 & 1.16 & 1.2E-07 & 2.00 & 0.3E-03 &1.16 &  3.55 \\

\hline
\end{tabular}
\caption{Numerical errors and computing time of CBMEFM with Newton's nonlinear solver for solving Eq. (\ref{ex2}).} \label{ex1_MTG}
\end{table}

\subsubsection{Example 2}
Next, we consider the following boundary value problem
\begin{equation}\label{example2}
\begin{cases} - u''=-u^2 \quad \text{on } \quad [0,1], \\u(0)=0\hbox{~and~}u(1)=1,\end{cases}
\end{equation}
which has two solutions shown in  Fig. \ref{Fig1}. We start $N=3$ and compute the solutions up to $N=1025$ by implementing CBMEFM with nonlinear solver. We compute the numerical error by using $\|e_H^h\|_{0,2}$ and summarize the convergence test and computing time
in Table \ref{ex22 MTG}. %Similar to Example 1, we observe that Newton's method is more efficient than the multigrid method. This is because the Gauss-Seidel method used in Newton's method is optimal in one dimension.

\begin{figure}[ht!]
\centering
\includegraphics[width=14cm,height=6cm,keepaspectratio]{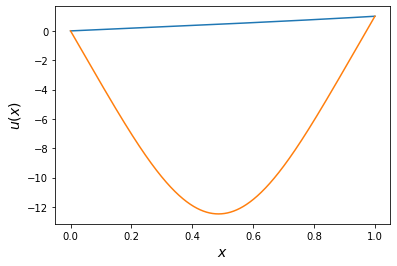}
   \caption{Numerical solutions of Eq. (\ref{example2}) with $N=1025$ grid points.}\label{Fig1}
\end{figure}

%\begin{figure}[h!]
%\centering
%\includegraphics[width=6cm,height=6cm,keepaspectratio]{}
%   \caption{Solutions for $-\triangle u=-u^2$ and interpolation, polynomial method. Line is solution with Newton's method. - - is interpolation from the lower level. * is solving polynomial from the lower level. }
%   \label{1d}
%\end{figure}

%After filtering, I used Newton's method to get solutions. The table show us how much it takes to calculate companion matrix, filtering and get solutions. We can also use other method to get solutions. The first table show us the results with the Multi-grid with smoothing as gradient decent.

\begin{table}[ht!]
\centering \footnotesize
\begin{tabular}{||c| c| c |c| c|  c| c| c| c| c| c||}

\hline
  \multirow{2}{1.5em}{h}& \multicolumn{4}{c|}{\# 1st solution} &\multicolumn{4}{c|}{\# 2nd solution}&\multicolumn{1}{c||}{CPUs} \\
\cline{2-10}
& $\|e_H^h\|_{0,2}$&Order & $\|e_H^h\|_{1,2}$ & Order  & $\|e_H^h\|_{0,2}$& Order & $\|e_H^h\|_{1,2}$ & Order & Newton\\
\hline

$2^{-2}$      &2.6E-3& x&2.3E-2& x&7.9E-1 &x & 5.8E-0 &x&0.09\\
$2^{-3}$      &6.8E-4&1.96 &1.2E-2&0.90& 2.0E-1&2.01&2.9E-0&1.03& 0.09\\
$2^{-4}$      & 1.7E-4&1.99&6.3E-3&0.94& 4.9E-2&  2.00  &1.4E-0&1.00 &0.09\\
$2^{-5}$      &4.3E-5& 2.00  &3.2E-3&0.97&  1.2E-2&2.00 & 7.12E-1&1.00 & 0.10\\
$2^{-6}$      &1.1E-5&2.00&1.6E-3&0.98& 3.1E-3&2.00 &3.6E-1&1.00& 0.11  \\
$2^{-7}$      &2.7E-6&2.00&8.2E-4&0.99& 7.7E-4&2.00 &1.8E-1&1.00& 0.14  \\
$2^{-8}$      &6.7E-7&2.00&4.1E-4&1.00& 1.9E-4&2.00 & 8.9E-2&1.00& 0.24   \\
$2^{-9}$      &1.7E-7&2.00 &2.1E-4&1.00& 4.8E-5&2.00  &4.5E-2&1.00& 0.49   \\
$2^{-10}$     &4.2E-8&2.00&1.0E-4&1.00& 1.2E-5& 2.00 &2.2E-2&1.00   & 3.35  \\
\hline
\end{tabular}
\caption{\label{ex22 MTG}Numerical errors and computing time of CBMEFM with Newton's solver for solving Eq. (\ref{example2}).}
\end{table}

\subsubsection{Example 3}
Thirdly, we consider the following nonlinear parametric differential equation
\begin{equation}\label{ex3}
\begin{cases} - u''=u^2(p-u^2) \quad \text{on } \quad [0,1], \\u'(0)=u(1)=0,\end{cases}
\end{equation}
where $p$ is a parameter. The number of solutions increases as $p$ gets larger \cite{hao2020adaptive}.  We compute the numerical solutions for $p=1,7$, and $18$ using CBMEFM with Newton's solver and show the solutions in Fig. \ref{1dp1}. he computation time and the number of solutions for different values of $p$ and step sizes are summarized in Table \ref{ex3 NM}. As $p$ increases, the number of solutions increases, and hence the computation time becomes longer.

%On the coarsest grid $h=2^0$ we will only have one solution and the next grid $h=2^{-1}$ have a maximum of 4 solutions. So $h=2^{-2}$ will have maximum $4\times4^2$.

%\begin{figure}[h!]
%\centering
%\includegraphics[width=3.7cm]{Image/sol p=1.png}
%\includegraphics[width=3.7cm]{Image/sol p=7.png}
%\includegraphics[width=3.7cm]{}
%   \caption{Numerical solutions of Eq. (\ref{ex3}) with $N=1024$ grid points for $p=1$, $p=7$, and $p=18$. }
%   \label{1dp1}
%\end{figure}

\begin{figure}[!ht]
\centering
\includegraphics[width=4.3cm]{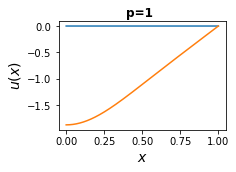}
\includegraphics[width=4.1cm]{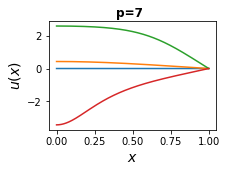}
\includegraphics[width=4.2cm]{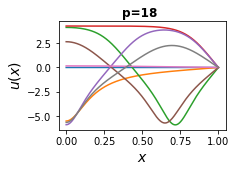}
   \caption{Numerical solutions of Eq. (\ref{ex3}) with $1025$ grid points for $p=1$, $p=7$, and $p=18$, respectively. }
   \label{1dp1}
\end{figure}

\begin{table}[!ht] 
    \centering \footnotesize
\begin{tabular}{||c| c| c| c| c| c| c||}
\hline
  \multirow{2}{1.5em}{ h }& \multicolumn{2}{c|}{p=1} &\multicolumn{2}{c|}{p=7}&\multicolumn{2}{c||}{p=18} \\
\cline{2-7}
& CPUs  & \# of sols  & CPUs  & \# of sols & CPUs  & \# of sols \\
\hline
$2^{-2}$      &0.17&2 & 0.15&5 & 0.29 & 19 \\
$2^{-3}$      &0.12&2& 0.35 &9& 3.01  &37\\
$2^{-4}$      & 0.11&2 & 0.20&4 & 28.17 &10 \\
$2^{-5}$      & 0.11&2& 0.11&4 & 0.38 &8 \\
$2^{-6}$      &0.11&2& 0.14&4 & 0.18  &8\\
$2^{-7}$      &0.14&2& 0.23&4 & 0.33  &8\\
$2^{-8}$      &0.26&2& 0.35&4 & 0.66  &8 \\
$2^{-9}$      &0.57&2& 0.95&4 & 1.93  &8 \\
$2^{-10}$     &3.08&2 & 5.55&4 & 11.54  &8 \\
\hline
\end{tabular}

\caption{Computing time (in seconds) and the number of solutions of Eq. (\ref{ex3}) for different $p$ by CBMEFM with Newton's solver. \label{ex3 NM}}
\end{table}

\begin{table}[!ht] 
\footnotesize
\begin{tabular}{||c| c| c| c| c| c| c|c| c|c| c|c| c||}
\hline
  \multirow{2}{1.5em}{h}& \multicolumn{4}{c|}{\# 1st solution } &\multicolumn{4}{c|}{\# 2nd solution}&\multicolumn{4}{c||}{\# 3rd solution} \\
\cline{2-13}
&$\|e_H^h\|_{0,2}$& Order &$\|e_H^h\|_{1,2}$ &Order  & $\|e_H^h\|_{0,2}$& Order  & $\|e_H^h\|_{1,2}$&Order & $\|e_H^h\|_{0,2}$& Order  &$\|e_H^h\|_{1,2}$&Order\\
\hline
$2^{-2}$      &1.6E-1&x&1.43& x& 1.6E-1&x&9.0E-1&x  & 2.7E-2 &x&1.1E-1&x\\
$2^{-3}$      &2.4E-1&-0.54&1.22&0.23& 3.3E-2& 2.30 &4.7E-1 & 0.93& 6.8E-3&  2.00  &5.1E-2&1.08\\
$2^{-4}$      & 4.2E-2&2.52&7.4E-1&0.73 & 7.4E-3&2.13&2.2E-1 & 1.07 &1.7E-3& 2.00 &2.5E-2&1.06 \\
$2^{-5}$      & 8.4E-3& 2.31 &3.0E-1&1.28& 1.9E-3&2.01&1.1E-1 & 1.01 & 4.3E-4&  2.00 &1.2E-2&1.03 \\
$2^{-6}$      &2.1E-3&2.04&1.4E-1&1.08& 4.6E-4&2.00&5.5E-2 & 1.00 & 1.1E-4& 2.00 &5.9E-3&1.02\\
$2^{-7}$      &5.1E-4&2.01&7.0E-2&1.03& 1.2E-4&2.00&2.8E-2 & 1.00 & 2.7E-5& 2.00 &3.0E-3&1.01\\
$2^{-8}$      &1.3E-4&2.00&3.5E-2&1.02& 2.9E-5&2.00&1.4E-2 & 1.00 & 6.7E-6& 2.00 &1.5E-3&1.00 \\
$2^{-9}$      &3.2E-5&2.00&1.7E-2&1.01& 7.2E-6&2.00&6.9E-3 & 1.00 & 1.7E-6& 2.00 &7.4E-4&1.00 \\
$2^{-10}$     &8.0E-6&2.00&8.6E-3&1.00 & 1.8E-6&2.00&3.5E-3 & 1.00 & 4.2E-7& 2.00 &3.7E-4&1.00 \\
\hline
\end{tabular}

\caption{Numerical errors only nontrivial solutions for Eq. (\ref{ex3}) when $p=7$ by CBMEFM with Newton's solver. }
\end{table}

\subsubsection{Example 4}
We consider the following semi-linear elliptic boundary value problem
\begin{equation}\label{exnew}
\begin{cases} -u''(x)=d^{r-2}|x|^r u^3(x)  \quad \text{on}\quad  [-1,1], \\u(-1)=u(1)=0 ,
\end{cases}
\end{equation}
where $d$ is the scaling coefficient corresponding to the domain. This example is based on a problem considered in \cite{xie2012finding,xie2022solving}. We have re-scaled the domain from $[-1,1]$ to $[-d,d]$ and formulated the problem accordingly.

We start with $r=3$ and $d=1$ and find $4$ non-negative solutions with $N=1025$ using CBMEFM. Using the homotopy method with respect to both $r$ and $d$, we also discover the same number of solutions for $r=3, d=10$ and $r=12, d=1$. The numerical solutions with different parameters are shown in Fig. \ref{newexample}. Then, we also create a bifurcation diagram for the numerical solutions with respect to $r$ by choosing $d=1$ and using $1025$ grid points, as shown in Fig. \ref{newexample_bif}. We only display the non-trivial non-negative solutions on this diagram. When $r$ is small, we have only one non-trivial solution. As $r$ increases, the number of non-trivial solutions increases and bifurcates to three solutions.

\begin{figure}[!ht]
\centering
\includegraphics[width=3.7cm]{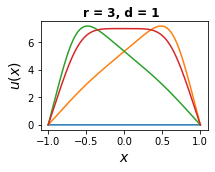}
\includegraphics[width=3.9cm]{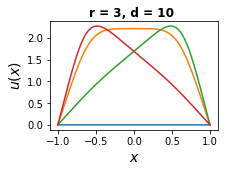}
\includegraphics[width=3.8cm]{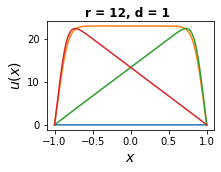}
   \caption{Numerical solutions of Eq. (\ref{exnew}) with $N=1025$ grid points for different $r$ and $d$.
   We have symmetric solutions so we only concern with one of them.}
   \label{newexample}
\end{figure}

\begin{figure}[ht!]
\centering
\includegraphics[width=14cm,height=6cm,keepaspectratio]{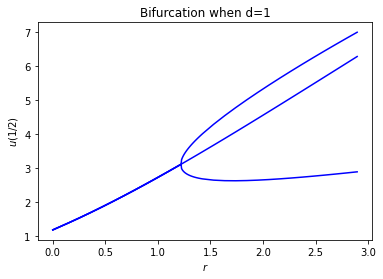}
   \caption{Bifurcation diagram of Eq. (\ref{exnew}) with respect to $r$ with $N=1025$ grid points and $d=1$.}
      \label{newexample_bif}
\end{figure}

\begin{table}[!ht] 
\centering
\footnotesize
\begin{tabular}{||c| c| c| c| c| c| c|c| c||}
\hline

  \multirow{2}{1.5em}{h} & \multicolumn{4}{c|}{\# 1st solution } & \multicolumn{4}{c||}{\# 2nd solution} \\
  
\cline{2-9}
&$\|e_H^h\|_{0,2}$& Order  &$\|e_H^h\|_{1,2}$&Order  & $\|e_H^h\|_{0,2}$& Order  & $\|e_H^h\|_{1,2}$&Order \\
\hline
$2^{-3}$      &1.3E-0& x &  7.9E-0&x & 1.0E-0&x&9.2E-0&x   \\
$2^{-4}$      &3.2E-1&2.04&4.2E-0&0.90& 5.7E-1&  0.87&5.4E-0 & 0.76\\
$2^{-5}$      & 8.0E-2&2.03&2.0E-0&1.06 & 1.1E-1&2.33&2.3E-0 & 1.22  \\
$2^{-6}$      & 2.0E-2&2.03&1.0E-0&1.01& 2.7E-2&2.03&1.2E-0 & 1.02 \\
$2^{-7}$      &4.9E-3&2.01&5.0E-1&1.00 & 6.7E-3&2.00&5.7E-1 & 1.00\\
$2^{-8}$      &1.2E-3&2.00&2.5E-1&1.00& 1.7E-3&2.00&2.9E-1 & 1.00 \\
$2^{-9}$      &3.1E-4&2.00&1.3E-1&1.00& 4.2E-4&2.00&1.4E-1 & 1.00 \\
$2^{-10}$      &7.6E-5&2.00&6.3E-2&1.00& 1.0E-4&2.00&7.2E-2 & 1.00  \\

\hline
\end{tabular}

\caption{Numerical errors only nontrivial solutions for Eq. (\ref{exnew}) when $r=3, d=1$ by CBMEFM with Newton's solver. }
\end{table}

\subsubsection{The Schnakenberg model}
We consider the steady-state  system of the Schnakenberg model in 1D with no-flux boundary conditions
\begin{equation}
\begin{cases}  u''+\eta(a-u+u^2v)=0, \quad \text{on } \quad [0,1] \\\ d v''+\eta(b-u^2v)=0, \quad  \quad \text{on } \quad [0,1] \\\ u'(0)=u'(1)=v'(0)=v'(1)=0. \end{cases}\label{Sch_equ}
\end{equation}
This model exhibits complex solution patterns for different parameters $\eta, a, b,$ and $d$ \cite{hao2020spatial}.
 Since there is only one nonlinear term $u^2v$, we rewrite the steady-state system as follows
\begin{equation}
\begin{cases}  u''+d v''+\eta(a+b-u)=0, \\d v''+\eta(b-u^2v)=0.\end{cases}\label{changed_from}
\end{equation}
In this case, we only have one nonlinear equation in the system (\ref{changed_from}). After discretization, we can solve ${\tenq{v}}_h$ in terms of ${\tenq{u}}_h$ using the first equation and plug it into the second equation. We obtain a single polynomial equation of ${\tenq{u}}_h$, which allows us to use the companion matrix to solve for the roots. Then we use CBMEFM to solve the Schakenberg model in 1D with the parameters $a=1/3, b=2/3, d=50, \eta =50$ up to $h=2^{-9}$. There are 3 solutions computed (\ref{ex5sol}) and we show the computing time in Table (\ref{ex4 NM}).

%\begin{equation}
%P^i(x)+Q^i(y)=c_1x+c_2y+c_3 := P_h^i(\widehat{\tenq{u}}_h^i) + Q_h^i(\widehat{\tenq{v}}_h^i) = 0,
%\end{equation}
%where
%\begin{equation}
% \widehat{\tenq{u}}_h = (\widetilde{u}_h^1,\cdots, \widetilde{u}_h^{i-1}, y, \widetilde{u}_h^{i+1}, \cdots, \widetilde{u}_h^{N_h})^T \mbox{ with } \quad \widehat{\tenq{v}}_h = (\widetilde{v}_h^1,\cdots, \widetilde{v}_h^{i-1}, y, \widetilde{v}_h^{i+1}, \cdots, \widetilde{v}_h^{N_h})^T. \nonumber
%\end{equation}
%Thus we solve for $y=-\frac{c_3+c_1x}{c_2}$ and plug into the second equation $d\triangle v+\eta(b-u^2v)=0$  which yields a single equation of $x$. 

\begin{comment}
\begin{table}[ht!]
    \centering
\begin{tabular}{l*{6}{c}r}
Step size              & Computing Time  \\
\hline
$h=2^{-2}$ &  0.86s   \\
$h=2^{-3}$            & 0.65s  \\
$h=2^{-4}$          &  1.03s  \\
$h=2^{-5}$      &  1.95s   \\
$h=2^{-6}$      &  4.71s   \\
$h=2^{-7}$      &  17.67s  \\
$h=2^{-8}$      &  27.16s   \\
$h=2^{-9}$      &  106.07s   \\
\end{tabular}
\caption{Numerical results using Newton's method  for equation (\ref{Sch_equ})}
\end{table}
\end{comment}

\begin{table}[ht!]
\small
\centering
\begin{tabular}{||c| c| c| c| c| c||}
\hline
  \multirow{2}{1.5em}{h}& \multicolumn{4}{c|}{\# 1st solution} & CPUs\\
\cline{2-5}
& $\|e_H^h\|_{0,2}$& Order&$\|e_H^h\|_{1,2}$ &Order &    \\
\hline

$2^{-3}$      &2.9E-2&x &1.9E-1&x& 0.58 \\
$2^{-4}$      & 6.3E-3&2.20&6.0E-2&1.63& 0.42\\
$2^{-5}$      &1.6E-3& 2.02  &2.7E-2&1.18& 0.55 \\
$2^{-6}$      &3.9E-4&2.00&1.3E-2&1.06& 0.84   \\
$2^{-7}$      &9.6E-5&2.00&6.3E-3&1.02& 1.96   \\
$2^{-8}$      &2.4E-5&2.00&3.1E-3&1.01& 6.14    \\
$2^{-9}$      &6.0E-6&2.00 &1.6E-3&1.00& 22.14    \\
$2^{-10}$     &1.5E-6&2.00&7.8E-4&1.00   & 157.88  \\
\hline
\end{tabular}
\caption{\label{ex4 NM}Numerical errors and computing time of CBMEFM with Newton's solvers for solving Eq. (\ref{Sch_equ}). We have symmetric solutions so we only concern with one of them.}
\end{table}

\begin{figure}[!ht]
\centering
\includegraphics[width=5cm]{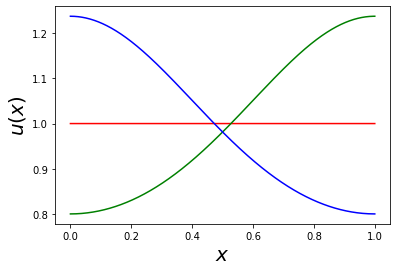}
\includegraphics[width=5cm]{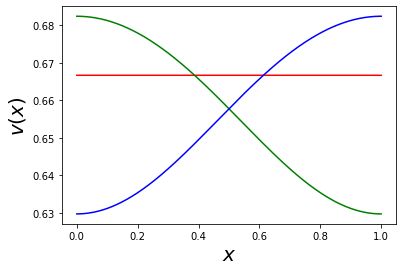}
  \caption{$3$ different solutions for equation  (\ref{Sch_equ}) with $N=1025$ grid points. The same colors are paired solutions.}
  \label{ex5sol}
\end{figure}

\subsection{Examples in 2D}
In this section, we discuss a couple of two dimensional examples. 

\subsubsection{Example 1} 
\begin{equation}\label{2dex}
\begin{cases} \Delta u(x,y)+u^2(x,y)=s\sin(\pi x)\sin(\pi y)  \quad \text{on} \quad \Omega, \\u(x,y)=0,\quad  \text{on} \quad \partial \Omega\end{cases} 
\end{equation}
where $\Omega=(0,1) \times (0,1)$ \cite{breuer2003multiple}. 

%Generating a 2D grid is a more complex task compared to 1D due to the availability of several options for grid generation. These options include barycenter refinement, edge refinement, and longest edge refinement. Barycenter refinement may not be the best choice for generating fine grids, it does not effective in eliminating sharp triangles, which may result in a large error according to the finite element method (FEM) theory. Longest edge refinement can reduce the diameter of the triangles, but it provides only a few combinations of nodes at each iteration. 

In this example, we utilized edge refinement to generate a multi-level grid and provide multiple initial guesses for the next level, as shown in Fig. \ref{Fig:Edge}. The number of nodes and triangles on each level is summarized in Table \ref{2d:MG}.
\begin{figure}[ht!]
\centering
\includegraphics[width=10cm]{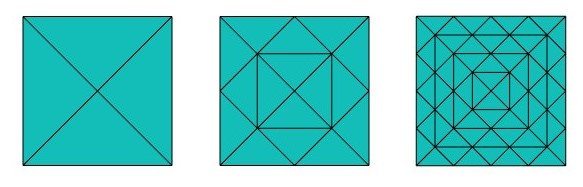}
  \caption{Multi-level grid based on the edge refinement with a rectangular domain.}\label{Fig:Edge}
\end{figure}

\begin{table}[ht!]
    \centering
\begin{tabular}{|l*{9}{|c}r}
\hline
{Step size} & $2^{0}$ & $2^{-1}$& $2^{-2}$& $2^{-3}$& $2^{-4}$& $2^{-5}$& $2^{-6}$ & $2^{-7}$\\
\hline

{\# of Nodes} & $5$ & $13$& $41$& $145$& $545$& $2113$& $8321$& $33025$\\
\hline

{\# of Triangles} & $4$ & $16$& $64$& $256$& $1024$& $4096$& $16384$ & $65536$\\
\hline
\end{tabular}
\caption{The number of nodes and triangles of the multigrid with the edge refinement for each level.} \label{2d:MG}
\end{table}

First, we compute the solutions with $s=1600$. We only applied the CBMEFM for the first  three multi-level grids until $\ell=2$ due to the extensive computation caused by a large number of solution combinations on the higher level.  Starting $\ell=3$, we used Newton refinement with an interpolation initial guess from the coarse grid.
We found 10 solutions and computed them until $h=2^{-7}$.
Since some solutions are the same up to the rotation, we plot only four solutions in Fig. \ref{2dbif}. It is worth noting that Eq. (\ref{2dex}) remains unchanged even when $x$ is replaced by $1-x$, and similarly for $y$. Therefore, rotating solutions 3 and 4 from Fig. \ref{2dgraph} would also yield valid solutions. Therefore, although there are 10 solutions in total,  we only plot the 4 solutions in Fig. \ref{2dgraph}. up to the rotation. 
We also computed the numerical error and  convergence order in both $L^2$  and $H^1$ norms for these solutions and summarized them in Table \ref{2dex:L2}.

\begin{table}[ht!]
    \centering
\begin{tabular}{l*{6}{c}r}
&Method&Step size &  \# of solutions             & Comp. Time  \\
\hline
&\multirow{3}{4em}{CBMEFM}&$2^{0}$ & 2 &  0.2s   \\
&&$2^{-1}$ &10     & 1.4s  \\
&&$2^{-2}$ &10       &  142.7s  \\
\hline
&\multirow{5}{4em}{Newton's refinement }&$2^{-3}$ &10  &  7.1s   \\
&&$2^{-4}$ &10   &  20.6s   \\
&&$2^{-5}$ &10 &  76.8s  \\
&&$2^{-6}$ & 10     &  540.4s   \\
&&$2^{-7}$ & 10     &  5521.3s   \\
\end{tabular}
\caption{The numerical performance summary of solving (\ref{2dex}) with $s=1600$. Until the level $\ell=2$ we used CBMEFM and applied Newton's refinement starting from  $\ell=3$.}
\end{table}

\begin{figure}[!ht]
\centering
\includegraphics[width=13.cm,height=3cm]{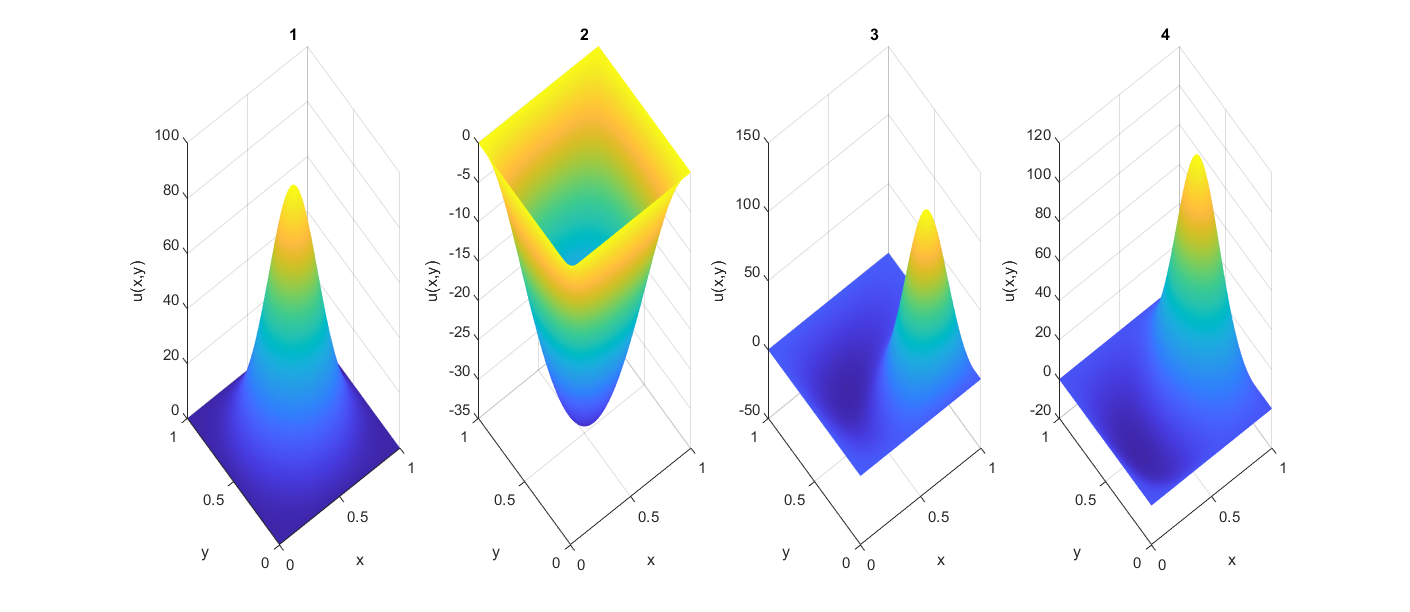}
  \caption{Multiple solutions of Eq. (\ref{2dex}) with $s=1600$ and a step size of $2^{-7}$.}
  \label{2dgraph}
\end{figure}

\begin{table}[!ht]
\centering\small
\begin{tabular}{||c| c| c| c| c| c| c|c| c||}
\hline
  \multirow{2}{1.5em}{h}& \multicolumn{2}{c|}{\# 1st solution} &\multicolumn{2}{c|}{\# 2nd solution}&\multicolumn{2}{c|}{\# 3rd solution}&\multicolumn{2}{c||}{\# 4th solution}  \\
\cline{2-9}
& $\|e_{_H}^h\|_{0,2}$ & Order  & $\|e_{_H}^h\|_{0,2}$ & Order &  $\|e_{_H}^h\|_{0,2}$ & Order&  $\|e_{_H}^h\|_{0,2}$ & Order\\
\hline

$2^{-1}$      &2.1E-0 &x& 1.2E-0 &x &x &x &x  &x \\
$2^{-2}$      &5.1E-0 & -1.26& 2.8E-0& -1.26& 2.3E+1 &x &5.4E-0 &x \\
$2^{-3}$      &3.9E-0 & 0.37& 0.7E-0& 1.94& 9.5E-0 & 1.28&6.9E-0 & -0.36\\
$2^{-4}$      &1.6E-0 & 1.30 & 0.2E-0& 1.98& 3.2E-0 & 1.56&2.3E-0 & 1.58\\
$2^{-5}$      &0.5E-0 & 1.70& 0.5E-1& 1.99 & 0.9E-0 & 1.85&0.7E-0 & 1.83 \\
$2^{-6}$      &0.1E-0 & 1.90& 0.1E-1& 2.00 & 0.2E-0 & 1.95&0.2E-0 & 1.95  \\
$2^{-7}$      &3.3 E-3 & 1.97& 2.9E-03& 2.00& 0.6E-1 & 1.99&0.4E-1 &1.99  \\
\hline
h& $\|e_H^h\|_{1,2}$ & Order  & $\|e_H^h\|_{1,2}$ & Order &  $\|e_H^h\|_{1,2}$ & Order&  $\|e_H^h\|_{1,2}$ & Order\\
\hline

$2^{-1}$      &2.6E+1 &x& 1.4E+1 &x &x & x& x & x\\
$2^{-2}$      &6.8E+1 & -1.40& 2.7E+1& -0.93& 1.9E+2 & x&7.2E+1 &x \\
$2^{-3}$      &5.6E+1 & 0.29& 1.5E+1& 0.91& 9.9E+1 &  0.92&7.9E+1 & -0.13\\
$2^{-4}$      &3.1E+1 & 0.86& 7.4E-0&  0.97 & 5.2E+1 & 0.92&4.2E+1 & 0.92\\
$2^{-5}$      &1.6E+1 & 0.97& 3.7E-0&0.99& 2.6E+1 & 1.02 &2.1E+1 & 1.01 \\
$2^{-6}$      &7.8E-0 & 1.00& 1.9E-0& 1.00& 1.3E+1 & 1.01&1.0E+1 & 1.01  \\
$2^{-7}$      &3.9E-0 & 1.00& 0.9E-0& 1.00& 6.3E-0 & 1.00&5.1E-0 & 1.00 \\
\hline

\end{tabular}
\caption{Numerical $L^2$ and $H^1$ errors for multiple solutions of Eq. (\ref{2dex}) with $s=1600$ shown in Fig. \ref{2dgraph}.}\label{2dex:L2}
\end{table}

\begin{comment}

\begin{table}[ht!]
\centering
\begin{tabular}{||c| c| c| c| c| c| c|c| c||}
\hline
  \multirow{2}{4em}{Step size}& \multicolumn{2}{c|}{1st solution} &\multicolumn{2}{c|}{2nd solution}&\multicolumn{2}{c|}{3rd solution}&\multicolumn{2}{c||}{4th solution}  \\
\cline{2-9}
& $||e_H^h||_{1}$ & Order  & $||e_H^h||_{1}$ & Order &  $||e_H^h||_{1}$ & Order&  $||e_H^h||_{1}$ & Order\\
\hline

$h=2^{-1}$      &25.72 && 14.27 & & & &  & \\
$h=2^{-2}$      &67.91 & -1.40& 27.19& -0.93& 186.75 & &72.00 & \\
$h=2^{-3}$      &55.56 & 0.29& 14.47& 0.91& 98.45 &  0.92&78.62 & -0.13\\
$h=2^{-4}$      &30.61 & 0.86& 7.40&  0.97 & 51.96 & 0.92&41.62 & 0.92\\
$h=2^{-5}$      &15.61 & 0.97& 3.72&0.99& 25.55 & 1.02 &20.59 & 1.01 \\
$h=2^{-6}$      &7.83 & 1.00& 1.87& 1.00& 12.66 & 1.01&10.23 & 1.01  \\
$h=2^{-7}$      &3.92 & 1.00& 0.93& 1.00& 6.31 & 1.00&5.11 & 1.00 \\
\hline
\end{tabular}
\caption{Numerical $H^1$ errors for multiple solutions of Eq. (\ref{2dex}) with $s=1600$ shown in Fig. \ref{2dgraph}.}\label{2dex:H1}
\end{table}
\end{comment}

Finally, we also explored the solution structure with respect to 
For small values of $s$, only two solutions are observed. However, as $s$ increases, the number of solutions also increases, as demonstrated in Fig. \ref{2dbif}.

\begin{figure}[ht!]
\centering
\includegraphics[width=10.cm,height=5cm]{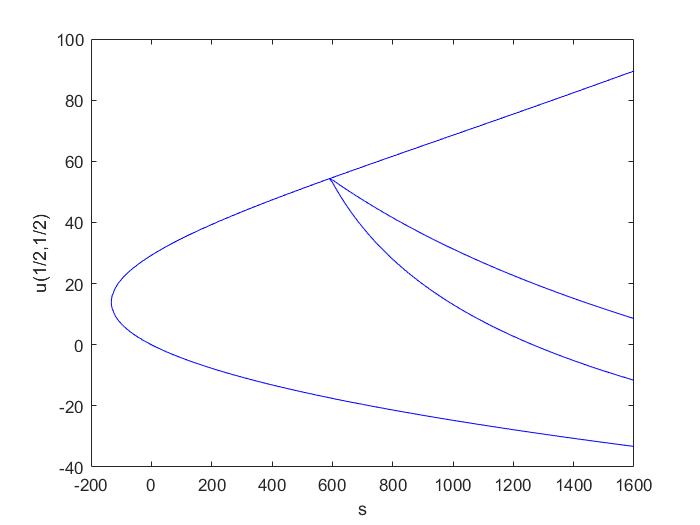}
   \caption{Bifurcation diagram of solutions of Eq. (\ref{2dex}) with respect to $s$.}
   \label{2dbif}
\end{figure}

\subsubsection{The Gray–Scott model in 2D}
The last example is the steady-state Gray-Scott model, given by
\begin{equation}
\begin{cases} D_A\Delta A=-SA^2+(\mu+\rho)A, \quad \text{on} \quad \Omega\\ D_S\Delta S=SA^2-\rho(1-S), \quad \text{on} \quad \Omega
\\ 
\displaystyle
\frac{\partial u}{\partial \textbf{n}}=\frac{\partial v}{\partial \textbf{n}}=0 \quad \text{on} \quad \partial \Omega

\end{cases}
\end{equation}
$\Omega=(0,1) \times (0,1)$ and $\textbf{n}$ is normal vector \cite{hao2020spatial}. The solutions of this model depend on the constants $D_A$, $D_S$, $\mu$, and $\rho$. In this example, we choose $D_A=2.5 \times 10^{-4}$, $D_S=5 \times 10^{-5}$, $\rho=0.04$, and $\mu=0.065$. Similar to the example of the Schnakenberg model in 1D (\ref{changed_from}), we can modify the system as follows:
\begin{equation} \label{2dgrayeq}
\begin{cases} D_A\Delta A+D_S\Delta S=(\mu+\rho)A-\rho(1-S), \\ D_S\Delta S=SA^2-\rho(1-S),\end{cases}
\end{equation}
As the first equation in the Gray-Scott model is linear, we can solve for $A$ in terms of $S$ and substitute into the second equation to obtain a single polynomial equation. 
When using the companion matrix on the coarsest grid ($\ell=0$) to solve the polynomial equation, we obtain $3^5$ complex solutions, leading to a large number of possible combinations on finer grids. To address this, we applied two approaches. The first approach involved keeping the real solutions and real parts of complex solutions, and using linear interpolation and Newton's method to refine them on finer grids. This approach yielded 24 solutions on a step size of $2^{-6}$, up to rotation shown in Fig. \ref{2dgray}. The second approach involved keeping only the real solutions, resulting in $1$ solution on $\ell=0$ and $3^{13}$ initial guesses on $\ell=1$. Using interpolation and Newton's method, we obtained 88 solutions on a step size of $2^{-6}$, up to rotation shown in Fig. \ref{2dgray1}. If we use more large coefficient for filtering conditions we can get 187 solutions.

\begin{figure}[!ht]
\centering
\includegraphics[width=10cm]{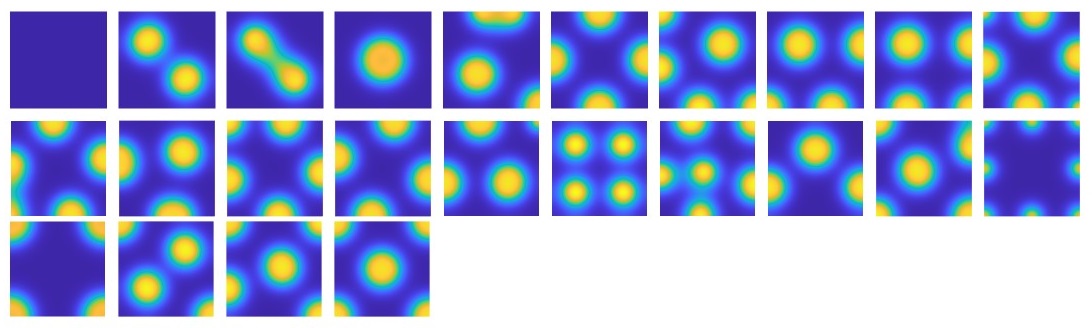}
\includegraphics[width=10cm]{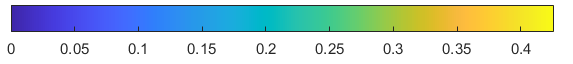}
  \caption{We have 24 solutions and plot only $A(x,y)$ from (\ref{2dgrayeq}) with a step size of $2^{-6}$. The initial guesses were refined by considering both real solutions and real parts of complex solutions on the coarsest grid ($\ell=0$).}
  \label{2dgray}
\end{figure}

\begin{comment}
\begin{figure}[!ht]
\centering
\includegraphics[width=10cm]{}
\includegraphics[width=10cm]{}
\includegraphics[width=10cm]{}
  \caption{ We have 88 solutions and plot only $A(x,y)$ from (\ref{2dgrayeq}) with a step size of $2^{-6}$. The initial guesses were refined by considering only the real solutions on the $\ell=1$ grid.}
  \label{2dgray1}
\end{figure}
\end{comment}

\begin{figure}[!ht]
\centering
\includegraphics[width=10cm]{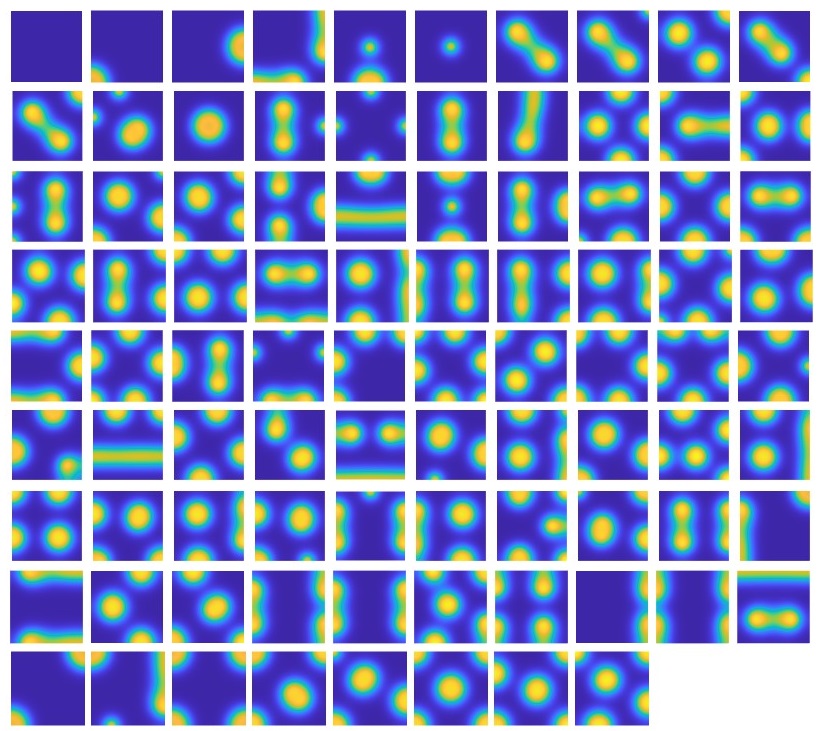}
\includegraphics[width=10cm]{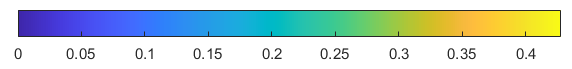}
  \caption{ We have 88 solutions and plot only $A(x,y)$ from (\ref{2dgrayeq}) with a step size of $2^{-6}$. The initial guesses were refined by considering only the real solutions on the $\ell=1$ grid.}
  \label{2dgray1}
\end{figure}

\section{Conclusion}\label{con} In this paper, we have presented a novel approach, the Companion-Based Multilevel finite element method (CBMFEM), which efficiently and accurately generates multiple initial guesses for solving nonlinear elliptic semi-linear equations with polynomial nonlinear terms. Our numerical results demonstrate the consistency of the method with theoretical analysis, and we have shown that CBMFEM outperforms existing methods for problems with multiple solutions.

Furthermore, CBMFEM has potential applications in more complex PDEs with polynomial nonlinear terms. To generalize our approach, we need to conduct further investigations to identify better filtering condition constants and better nonlinear solvers. In our future work, we shall incorporate the multigrid method to speed up the Newton method, which should further improve the efficiency of the method. Overall, CBMFEM is a promising approach for solving elliptic PDEs with multiple solutions, and we will apply it to widespread applications in various scientific and engineering fields.

{\bf Data availability} Data sharing is not applicable to this article as no datasets were generated or analyzed during
the current study.

{\bf Declarations}
 WH and SL is supported by NIH via 1R35GM146894. YL is supported by NSF via DMS 2208499 There is no conflict of interest.

%\textcolor{red}{To improve the solver, a useful method to consider is the Multi-Grid (MG) method. However, we face certain challenges when implementing the MG method. Specifically, the coarsest grid linearized problem may not be well-posed due to the potential violation of inf-sup conditions. Additionally, the coarsest grid may not have enough solutions resulting in convergence to the same solution. To address these issues, we plan to conduct further investigations, focusing on exploring the use of free conditioning in the MG method.} 

\begin{comment}\textcolor{red}{We have also observed an interesting phenomenon where the Multigrid (MG) method does not always work. Upon further investigation, we have found that this might be due to the fact that the linearized problem on the coarsest grid may not be well-posed. Additionally, in some cases, the coarsest grid might not have enough solutions, causing solutions to converge to the same one at the coarsest grid. We are currently investigating this issue further.} Free conditioning.
\end{comment}

\bibliographystyle{plain}
% argument is your BibTeX string definitions and bibliography database(s)
\bibliography{references}

%\section{Reference}
%
%W. Hao, C. Zheng, An adaptive homotopy method for computing bifurcations of nonlinear parametric systems, J. Sci. Comput. 82 (3) (2020) 53, http://dx.doi.org/10.1007/s10915-020-01160-w.

\end{document}